\definecolor{verydarkblue}{rgb}{0,0,0.5}
\theoremstyle{plain}
\newtheorem{introtheorem}{Theorem}
\crefname{introtheorem}{Theorem}{Theorems}
\newtheorem{theorem}{Theorem}[section]
\newtheorem{proposition}[theorem]{Proposition}
\newtheorem{lemma}[theorem]{Lemma}
\newtheorem{corollary}[theorem]{Corollary}
\theoremstyle{definition}
\newtheorem{definition}[theorem]{Definition}
\newtheorem{remark}[theorem]{Remark}
\numberwithin{figure}{section}
\numberwithin{equation}{section}
\def\R{{\mathbb R}}
\def\C{{\mathbb C}}
\def\P{{\mathbb P}}
\def\F{{\mathbb F}}
\def\cC{\mathcal{C}}
\def\cF{\mathcal{F}}
\def\cQ{\mathcal{Q}}
\def\cS{\mathcal{S}}
\def\F{\mathcal{F}}
\def\M{\mathcal{M}}
\def\O{\mathcal{O}}
\def\U{\mathcal{U}}
\def\M{\mathcal{M}}
\def\a{\alpha}
\def\f{\phi}
\def\ep{\epsilon}
\def\n{\nu}
\def\m{\mu}
\def\p{\pi}
\def\r{\rho}
\def\t{\tau}
\def\D{\Delta}
\def\G{\Gamma}
\def\Om{\Omega}
\def\.{\cdot}
\def\^{\widehat}
\def\~{\widetilde}
\def\ov{\overline}
\def\surj{\twoheadrightarrow}
\def\inj{\hookrightarrow}
\def\de{\partial}
\def\({\left(}
\def\){\right)}
\renewcommand{\and}{ \ \ \text{ and } \ \ }
\def\reg{\mathrm{reg}}
\def\sing{\mathrm{sing}}
\def\an{\mathrm{an}}
\def\KR{\mathrm{KR}}
\DeclareMathOperator{\coker} {coker}
\DeclareMathOperator{\Gr} {Gr}
\DeclareMathOperator{\Pic} {Pic}
\DeclareMathOperator{\Sing} {Sing}
\title{Smooth solutions to the complex Plateau problem}
\author{Tommaso de Fernex}
\address{Department of Mathematics, University of Utah, Salt Lake City, UT 48112, USA}
\email{{\tt defernex@math.utah.edu}}
\subjclass[2010]{Primary 32V99; Secondary 32E10, 32S05}
\keywords{CR manifold, Stein space, Nash tansformation, Zariski--Lipman conjecture, link of singularity}
\thanks{Research partially supported by NSF Grant DMS-1700769}
\begin{document}

\begin{abstract}
Building on work of Du, Gao, and Yau, we give a characterization of smooth solutions, up to normalization, of 
the complex Plateau problem for strongly pseudoconvex Calabi--Yau CR manifolds of dimension $2n-1 \ge 5$
and in the hypersurface case when $n=2$. The latter case 
was completely solved by Yau for $n \ge 3$ but only partially solved by Du and Yau for $n=2$. 
As an application, we determine the existence of a link-theoretic invariant of 
normal isolated singularities that distinguishes smooth points
from singular ones.
\end{abstract}

\maketitle

\section{Introduction}

The complex Plateau problem has been studied in the context of CR manifolds
in the articles of Rossi \cite{Ros65} and Harvey and Lawson \cite{HL75} (cf.\ \cite{LY98,HL00})
where it is proved that
a strongly pseudoconvex compact CR manifold $M$ of dimension $2n-1 \ge 3$
that is contained in the boundary of a strongly pseudoconvex bounded domain of $\C^N$
is the boundary of a unique complex analytic variety $V \subset \C^N \setminus M$ 
with boundary regularity and isolated singularities.%
\footnote{The requirement that $M$ is contained in the boundary of a strongly pseudoconvex bounded domain of $\C^N$ 
is needed to ensure that the closure of $V$ has no self intersections along $M$, and hence guarantee 
boundary regularity; one can work around it using \cite[Theorem~10.4']{HL00}.}
We will refer to $V$ as the Stein filling of $M$.
What remain to be determined are intrinsic conditions on $M$ that characterize when $V$
(or its normalization, to be precise) is smooth. 

In the hypersurface case $N = n+1$, this last question was completely answered 
for $n \ge 3$ by Stephen Yau in \cite{Yau81}, 
where the smoothness of the Stein filling $V$ is proved to be characterized by the vanishing of the
Kohn--Rossi cohomology groups $H^{p,q}_\KR(M)$ for $1 \le q \le n-2$.
Yau's theorem answered affirmatively a conjecture of Kohn and Rossi from \cite{KR65}.
The main difficulty to extend the result to the case $n=2$ 
is that the Kohn--Rossi cohomology groups are infinite dimensional in this case.
Over a quarter of a century passed before
the hyperpsurface case was finally addressed in the remaining case $n=2$, first in \cite{LY07} still
in terms of the Kohn--Rossi cohomology of $M$, and then in \cite{DY12}
using a different numerical invariant of $M$ denoted by $g^{(1,1)}(M)$. 
The main result of \cite{DY12} states that, under the condition that 
$M$ has holomorphic De Rham cohomology $H_h^2(M) = 0$, the 
Stein filling is smooth if and only if $g^{(1,1)}(M) = 0$. 
This invariant has also been studied in the papers \cite{DLY11,DG14}.
Yau communicated to us that the definition of this invariant came as a result of a study started in the papers
\cite{LYY01,Yau04} where Bergman functions are used to study isolated singularities.

In order to extend the results beyond the hypersurface case, 
a new invariant of $M$ which generalizes $g^{(1,1)}(M)$ to higher dimensions 
is introduced in \cite{DGY16}. The definition of this invariant uses
holomorphic cross sections of exterior powers of the complex bundle $\^T(M)^*$. 
Here $\^T(M)^*$ is the dual of the holomorphic tangent bundle
$\^T(M) = \C T(M)/T_{0,1}(M)$
of $M$, with $T_{0,1}(M)$ denoting the conjugate of the CR structure of $M$;
the notion of holomorphic cross section is defined using the tangential
Cauchy--Riemann condition on $M$. 
Denoting by $\G_h(M,E)$ the space of global holomorphic cross sections
of a holomorphic bundle $E$ on $M$, Du, Gao and Yau define
\[
g^{(\wedge^p1)}(M) := \dim \coker\big(\wedge^p\G_h(M,\^T(M)^*) \to \G_h(M,\wedge^p\^T(M)^*)\big).
\]
This invariant is zero whenever the normalization of $V$ is smooth, 
and the two main theorems of \cite{DGY16} provide partial converses to this property when $p=n$. 
The first states that if $n \ge 3$ and $g^{(\wedge^n1)}(M) = 0$ 
then the normalization of $V$ has at most rational singularities.
The second proves that if $M$ is a Calabi--Yau CR manifold of dimension 5 (i.e., $n = 3$) 
and $g^{(\wedge^31)}(M) = 0$, then the normalization of $V$ is smooth.
This point of view is further explored in \cite{Du} where an alternative characterization
of smooth solutions of the complex Plateau problem in the $n \ge 3$ hypersurface case is given. 

The purpose of this paper is to show that this approach leads in fact to a characterization of
smooth solutions to the complex
Plateau problem, up to normalization, in the Calabi--Yau case for all $n \ge 3$
and in the hypersurface case for all $n \ge 2$. 

\begin{introtheorem}
\label{th:Plateau-CY}
For any strongly pseudoconvex compact Calabi--Yau CR manifold  $M \subset \C^N$ of dimension $2n-1 \ge 5$
that is contained in the boundary of a strongly pseudoconvex bounded domain of $\C^N$,
the Stein filling $V$ has smooth normalization if and only if $g^{(\wedge^n1)}(M) = 0$.
\end{introtheorem}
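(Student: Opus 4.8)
The plan is as follows. The ``only if'' direction is among the results of \cite{DGY16} recalled above, so assume $g^{(\wedge^n1)}(M)=0$. Let $\nu\colon\widetilde V\to V$ be the normalization; it has finitely many isolated normal singular points $\Sigma$, and it suffices to prove that each of them is a smooth point. The first step is to rewrite the hypothesis algebraically on $\widetilde V$. Since $M$ lies in the smooth locus of $V$ and $\^T(M)^*$ is canonically $\Omega^1_V|_M$, every holomorphic cross section of $\wedge^p\^T(M)^*$ over $M$ extends --- by CR extension across the strictly pseudoconvex boundary, and across the singular locus, which has codimension $\ge 2$ --- to a section of the sheaf $\Omega^{[p]}_{\widetilde V}$ of reflexive $p$-forms, and conversely. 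Thus $\G_h(M,\^T(M)^*)\cong H^0(\widetilde V,\Omega^{[1]}_{\widetilde V})$ and $\G_h(M,\wedge^n\^T(M)^*)\cong H^0(\widetilde V,\Omega^{[n]}_{\widetilde V})$ compatibly with exterior product, and since $\widetilde V$ is Stein, $\wedge^nH^0(\Omega^{[1]}_{\widetilde V})\to H^0(\wedge^n\Omega^{[1]}_{\widetilde V})$ is surjective and
\[
g^{(\wedge^n1)}(M)\;=\;\sum_{x\in\Sigma}\dim_{\C}\coker\big((\wedge^n\Omega^{[1]}_{\widetilde V})_x\to(\Omega^{[n]}_{\widetilde V})_x\big).
\]
So $g^{(\wedge^n1)}(M)=0$ precisely when the canonical map $\wedge^n\Omega^{[1]}_{\widetilde V}\to\Omega^{[n]}_{\widetilde V}$ is surjective; it is this property that must be shown to force smoothness.

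By the first main theorem of \cite{DGY16}, $n\ge 3$ and $g^{(\wedge^n1)}(M)=0$ already give that $\widetilde V$ has rational singularities, while the Calabi--Yau hypothesis furnishes a nowhere-vanishing reflexive $n$-form, so $\Omega^{[n]}_{\widetilde V}\cong\O_{\widetilde V}$; in particular $\widetilde V$ is Gorenstein, hence has canonical singularities with trivial canonical sheaf. I would then induct on $n\ge 3$. The base case $n=3$ is exactly the second main theorem of \cite{DGY16}. For the step, let $n\ge 4$, fix $x\in\Sigma$, and take a general hyperplane section $D\subset\widetilde V$ through $x$ (the pullback of a general hyperplane of $\C^N$ through $\nu(x)$); since $\nu$ is finite and unramified over the smooth locus, Bertini makes $D$ normal with its only singular point at $x$, closed and Stein in $\widetilde V$. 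By adjunction and triviality of $\O_{\widetilde V}(D)$, $\omega_D\cong\omega_{\widetilde V}|_D\cong\O_D$, so $\partial D$ is a strongly pseudoconvex compact Calabi--Yau CR manifold $M'$ of dimension $2(n-1)-1\ge 5$ with normalized Stein filling $D$.

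The heart of the step is to show $g^{(\wedge^{n-1}1)}(M')=0$, i.e.\ (by the reformulation above, applied to $D$) that $\wedge^{n-1}\Omega^{[1]}_D\to\Omega^{[n-1]}_D$ is surjective. I would use the conormal sequence $0\to\O_D\to\Omega^1_{\widetilde V}|_D\to\Omega^1_D\to 0$ on the smooth locus of $D$, together with its reflexive hull: given a reflexive $(n-1)$-form $\beta$ on $D$, lift it to a section of $\Omega^{[n]}_{\widetilde V}\cong\O_{\widetilde V}$, decompose that section into a sum of wedges $\eta_1\wedge\cdots\wedge\eta_n$ of reflexive $1$-forms by the hypothesis $g^{(\wedge^n1)}(M)=0$, restrict to $D$, and expand each $\eta_1\wedge\cdots\wedge\eta_n|_D$ via the conormal splitting into an $H^0(\O_D)$-linear combination of wedges of $n-1$ of the restricted $1$-forms; because the image of the exterior-product map is an $H^0(\O_D)$-submodule, the coefficient functions are harmless and $\beta$ lands in the image. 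By the inductive hypothesis $D$ has smooth normalization at $x$, and being normal there it is smooth at $x$, so $\dim T_xD=n-1$; since $D$ is a general hyperplane section through $x$, $\dim T_x\widetilde V=\dim T_xD+1=n$, so $\widetilde V$ is smooth at $x$. As $x\in\Sigma$ was arbitrary, $\widetilde V$ is smooth, completing the induction.

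I expect the principal obstacle to be the first step: the faithful identification of the analytically defined $g^{(\wedge^n1)}(M)$ with the local cokernel of $\wedge^n\Omega^{[1]}_{\widetilde V}\to\Omega^{[n]}_{\widetilde V}$, which requires controlling the extension of coherent sheaves of differentials both across the strictly pseudoconvex boundary and across the singular points --- exactly the place where a hypothesis such as rationality (furnishing the needed cohomology vanishing) is likely used essentially, and not merely through the base case. A secondary difficulty is the descent of the vanishing to a general hyperplane section, since the conormal sequence is not exact at the singular points and one must argue on the smooth locus and extend; verifying that strong pseudoconvexity and the Calabi--Yau structure pass to general slices is, by contrast, routine. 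One could also replace the slicing induction by an argument through the Nash transformation $\sigma$ of $\widetilde V$: surjectivity of $\wedge^n\Omega^{[1]}_{\widetilde V}\to\omega_{\widetilde V}\cong\O_{\widetilde V}$ forces the determinant of the tautological Nash bundle to be trivial on the blowup, and --- using rationality together with the extension theorem for differential forms on canonical singularities --- this triviality forces $\sigma$ to be an isomorphism, whence smoothness by Nobile's theorem; this is presumably how the statement connects to the Zariski--Lipman circle mentioned in the introduction.
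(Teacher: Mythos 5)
Your route is genuinely different from the paper's, and it is plausible, but it leans on more machinery and has a couple of spots that would need care. The paper does not induct on dimension at all. After the same two translations you make --- (i) $g^{(\wedge^n1)}(M)=\dim\coker(\wedge^n\Om^{[1]}_{\widetilde V}\to\Om^{[n]}_{\widetilde V})$ via \cite[Lemma 4.7]{DGY16}, and (ii) $\Om^{[n]}_{\widetilde V}\cong\O_{\widetilde V}$ from the Calabi--Yau hypothesis --- it observes that surjectivity of $\wedge^n\Om^{[1]}_{\widetilde V}\to\Om^{[n]}_{\widetilde V}$ forces $\Om^{[n]}_{\widetilde V}\cong\wedge^n\Om^{[1]}_{\widetilde V}/(\text{torsion})$, so the Nash transformation of $\wedge^n\Om^{[1]}_{\widetilde V}/(\text{torsion})$ is the identity. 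A short lemma (\cref{lem:N(F)=N(detF)=N(F/tor)}) identifies this with the Nash transformation of $\Om^{[1]}_{\widetilde V}$; by the universal property it is the identity only if $\Om^{[1]}_{\widetilde V}$ is already locally free, equivalently $(\Om^1_{\widetilde V})^*$ is locally free, and one concludes by the Zariski--Lipman conjecture for isolated singularities of dimension $\ge3$ (van Straten--Steenbrink \cite{vSS85}). In particular, the paper's proof is self-contained with respect to the $n=3$ Calabi--Yau theorem of \cite{DGY16}: it re-proves it rather than using it as a base case, and it never invokes rationality.

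Concerning your primary (slicing) argument: the residue/interior-product computation showing $g^{(\wedge^{n-1}1)}(M')=0$ passes to a general hyperplane section is the right idea, but several points you wave through need justification and a couple are delicate. Normality of $D$ at $x$ is not automatic from Bertini alone when $\widetilde V$ is only normal: you need $\widetilde V$ to be $S_3$ to deduce $S_2$ for the slice, and for that you are implicitly relying on rationality (hence Cohen--Macaulayness) of $\widetilde V$, which you cite from \cite{DGY16}. The coefficient functions $a_i=\eta_i(v)|_D$ live a priori only on the smooth locus and are to be extended by Hartogs, and the transversal vector field $v$ exists only away from $\Sing\widetilde V$; this is fine but should be said. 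Finally, applying the inductive hypothesis requires $\partial D$ (or a small link of $x$ in $D$) to be strongly pseudoconvex and to sit in the boundary of a strongly pseudoconvex bounded domain after an embedding of the normal slice; this is where the statement of the theorem interacts with the fact that $\widetilde V$ itself is not embedded in $\C^N$, and should not be dismissed as routine.

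Concerning your alternative sketch via Nash transformations: this is essentially the paper's argument, but you have the final step wrong. Nobile's theorem \cite{Nob75} applies to the Nash transformation of $\Om^1_{\widetilde V}$ (the classical Nash blow-up), not to that of $\Om^{[1]}_{\widetilde V}$. What the Nash-transformation argument actually yields is that $\Om^{[1]}_{\widetilde V}$, equivalently $(\Om^1_{\widetilde V})^*$, is locally free; deducing smoothness from this is precisely the Zariski--Lipman conjecture. In the regime $n\ge3$ with isolated singularities this is the result of van Straten--Steenbrink, and this, rather than Nobile together with rationality and extension theorems, is how the argument closes.
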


\begin{introtheorem}
\label{th:Plateau-hypersurface}
For any strongly pseudoconvex compact CR manifold  $M \subset \C^{n+1}$ of dimension $2n-1 \ge 3$
that is contained in the boundary of a strongly pseudoconvex bounded domain of $\C^{n+1}$,
the Stein filling $V$ is smooth if and only if $g^{(\wedge^n1)}(M) = 0$.
\end{introtheorem}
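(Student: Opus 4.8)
To prove \cref{th:Plateau-hypersurface}, the plan is to reduce the statement to a local question at the singular points of $V$. One direction is already available: by \cite{DGY16}, $g^{(\wedge^n1)}(M) = 0$ whenever the normalization of $V$ is smooth; moreover, since $V \subset \C^{n+1}$ is a hypersurface with isolated singularities and $\dim V = n \ge 2$, it is regular in codimension one and hence normal by Serre's criterion, so ``$V$ smooth'' and ``$V$ has smooth normalization'' coincide. It thus remains to show that $g^{(\wedge^n1)}(M) = 0$ forces $V$ to be smooth.

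First I would translate the invariant into sheaf-theoretic data on the filling. Using the extension theorems for $\bar\partial_b$-closed $(p,0)$-forms across a strongly pseudoconvex boundary, $\Gamma_h(M, \wedge^p\^T(M)^*)$ is identified with the space of holomorphic $p$-forms on $V \setminus \Sing V$ with suitable boundary regularity; since only the behavior near $\Sing V$ affects the cokernel in question, the comparison map $\wedge^p\Gamma_h(M,\^T(M)^*) \to \Gamma_h(M,\wedge^p\^T(M)^*)$ becomes, up to boundary terms, the $p$-th exterior power $\wedge^p\Omega^{[1]}_V \to \Omega^{[p]}_V$ of reflexive differentials on global sections (equivalently, after the Nash transformation $\nu\colon\^V\to V$, the determinant map on the Nash bundle). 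Because $V$ is Stein, Cartan's Theorems~A and~B then present $g^{(\wedge^n1)}(M)$ as a finite sum $\sum_{q\in\Sing V}\delta(V,q)$, where $\delta(V,q) := \dim_\C\coker\big(\wedge^n\Omega^{[1]}_{V,q} \to \Omega^{[n]}_{V,q}\big)$. So \cref{th:Plateau-hypersurface} is equivalent to the assertion that for an isolated hypersurface singularity $(V,q)$ of dimension $n \ge 2$ one has $\delta(V,q) = 0$ if and only if $(V,q)$ is a smooth point.

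For $n \ge 3$ this follows from \cref{th:Plateau-CY}: a hypersurface singularity is Gorenstein, so its dualizing sheaf is trivial near $q$ and the determinant line bundle of $\^T(M)^*$ is holomorphically trivial, i.e.\ $M$ is a Calabi--Yau CR manifold (a nowhere-zero holomorphic $(n,0)$ cross-section being furnished by the Gelfand--Leray form); since $\dim M = 2n-1 \ge 5$ and $V$ is its own normalization, \cref{th:Plateau-CY} applies. The genuinely new case is $n = 2$, where \cref{th:Plateau-CY} is unavailable and where \cite{DY12} needed the extra hypothesis $H^2_h(M) = 0$. Here I would argue directly: writing $(V,q) = (\{F = 0\},q) \subset (\C^3,q)$ and trivializing $\omega_V$ near $q$ by $\eta := \Res\big(\tfrac{dz_1 \wedge dz_2 \wedge dz_3}{F}\big)$, one has $dz_i \wedge dz_j = \pm F_{z_k}\,\eta$ for $\{i,j,k\} = \{1,2,3\}$, so the image of the \emph{Kähler} exterior product $\wedge^2\Omega^1_{V,q} \to \omega_{V,q}$ is the ideal $J_F\,\omega_{V,q}$ with $J_F = (F_{z_1},F_{z_2},F_{z_3})$, and its cokernel is the Tjurina algebra $\mathcal{O}_{V,q}/J_F$, of dimension the Tjurina number $\tau_q$, which is positive precisely when $q$ is singular. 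It then remains to show that $\delta(V,q) = 0$ — which a priori only involves reflexive $1$-forms — still forces the tangent sheaf $\cHom(\Omega^1_{V,q},\mathcal{O}_{V,q})$ to be locally free, after which the Zariski--Lipman theorem for hypersurface singularities gives that $q$ is smooth.

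The main obstacle is precisely this last implication when $n = 2$. The clean identity $\delta(V,q) = \tau_q$ is transparent only for the Kähler exterior product; the CR-theoretic invariant is naturally phrased through reflexive differentials, and one must rule out that ``extra'' reflexive $1$-forms on $(V,q)$ produce new exterior products filling up $\omega_{V,q}$, without appealing to the classification of Gorenstein surface singularities. What makes this manageable — and what ultimately lets the hypothesis $H^2_h(M) = 0$ of \cite{DY12} be dropped — is the hypersurface structure itself: the triviality of $\omega_V$, the local complete intersection property, and the validity of Zariski--Lipman in this setting, which together pin $\delta(V,q)$ to a quantity that vanishes exactly when $(V,q)$ is smooth.
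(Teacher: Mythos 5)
Your setup is right: you correctly reduce one direction to \cite{DGY16}, observe that an isolated hypersurface singularity of dimension $n \ge 2$ is normal (so normalization is not an issue), identify $g^{(\wedge^n1)}(M)$ with $\dim\coker\big(\wedge^n\Omega_V^{[1]} \to \Omega_V^{[n]}\big)$ localized at the singular points (this is \cref{lem:g(M)}), and dispose of $n \ge 3$ via the Gorenstein/Calabi--Yau structure and \cref{th:Plateau-CY}. You also correctly flag Källström's theorem \cite{Kal11} (Zariski--Lipman for lci) as what should close the $n = 2$ case. But the proof has a genuine gap exactly where you yourself flag ``the main obstacle'': you never establish that $\delta(V,q) = 0$ forces $\Omega_V^{[1]}$ to be locally free near $q$. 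Your Tjurina-number computation is a computation for the \emph{K\"ahler} exterior power $\wedge^2\Omega^1_{V,q} \to \omega_{V,q}$, not for the reflexive one, and you acknowledge that the passage from reflexive data back to local freeness of the tangent sheaf is left open. So the argument stops one step short of a proof.

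The missing step in the paper is a Nash-transformation argument, and it is worth noting that it works uniformly for lci singularities without any $n=2$ special pleading. Since $V$ is Gorenstein, $\Omega_V^{[n]}$ is a line bundle, so $\delta = 0$ says $\wedge^n\Omega_V^{[1]}/\t(\wedge^n\Omega_V^{[1]}) \cong \Omega_V^{[n]}$ is a line bundle and in particular has trivial Nash transformation. By \cref{lem:N(F)=N(detF)=N(F/tor)}, the Nash transformation of a rank-$n$ coherent sheaf $\cF$ agrees with that of $\wedge^n\cF/\t(\wedge^n\cF)$; hence the Nash transformation of $\Omega_V^{[1]}$ is an isomorphism. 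By the universal property (\cref{th:N(F)-analytic}) this forces $\Omega_V^{[1]}$ to admit a rank-$n$ locally free quotient, and since $\Omega_V^{[1]}$ is torsion-free of rank $n$ that quotient is all of $\Omega_V^{[1]}$. Dualizing, $(\Omega_V^1)^*$ is locally free and Källström's theorem gives smoothness. You mention Nash transformations parenthetically (``the determinant map on the Nash bundle'') but never use them as the engine; that is the idea you need to supply to close the argument.
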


The proofs of the above theorems rely on known cases of the Zariski--Lipman conjecture \cite{Lip65}.
Were this conjecture known to hold for all normal surface singularities, the proof 
of \cref{th:Plateau-CY} would extend to the case $n=2$. 

Following \cite{DY12,DGY16}, the proofs rely on computing $g^{(\wedge^n1)}(M)$ from a related
numerical invariant of the singularities of $V$. 
It was conjectured by Stephen Yau that, for $n=2$, such invariant is positive for all normal surface singularities (see \cite{DG14}, where this conjecture is solved for rational surface singularities);
the conjecture was extended to higher dimensions 
in \cite{Du}. The results of this paper confirm these conjectures for 
normal Gorenstein singularities of dimension $n \ge 3$ and locally
complete intersection singularities of dimension $n=2$ (cf.\ \cref{th:Plateau-Gor,th:Plateau-lci}).

The case $n \ge 3$ of \cref{th:Plateau-hypersurface} 
was already proved by a different argument in \cite{Du}, and
can be viewed as an alternative solution of the complex Plateau problem compared 
to the one in \cite{Yau81}. Since hypersurface singularities are Gorenstein and
boundaries of Stein spaces with boundary regularity and trivial canonical bundle
are examples of Calabi--Yau CR manifolds, this case can also be seen following from \cref{th:Plateau-CY}.
The original contribution of \cref{th:Plateau-hypersurface} is in the case $n=2$ where the theorem improves
upon the main result of \cite{DY12} by removing the assumption that
$H_h^2(M) = 0$, hence finally completing the characterizarion of smooth solutions of 
the Plateau problem in the hypersurface case for all $n \ge 2$. 
The above results are also closely related to the recent works
\cite{TYZ13,LYZ15,YZ17,YZ}.

The reason why one looks at the problem up to normalization is that the boundary $M$ does not see
the difference between $V$ and its normalization, 
$V$ having only isolated singularities. In fact, it is natural to look at the normalization,
as embeddable strongly pseudoconvex
compact CR manifolds have unique normal Stein filling with boundary regularity and any other Stein
filling is a projection of it.
The issue of taking normalization disappears in the hypersurface
case for the simple reason that 
isolated locally complete intersection singularities of dimension $n \ge 2$ are already normal. 

The complex Plateau problem is closely related to the question asking whether the link of
a normal isolated complex singularity carries enough information to distinguish smooth points
from singular ones. The question traces back at least to the work of
Mumford, Brieskorn, and Milnor \cite{Mum61,Br66a,Br66b,Mil68}
which showed that the diffeomorphism class of the link distinguishes smooth points in dimension 2 but not 
in dimension 3 or higher. The question has recently been revived 
by McLean in \cite{McL16}, where a connection is established between the contact structure of the link
and the minimal log discrepancy of the singularity, an invariant of singularities
coming up in the minimal model program.
In his paper, McLean asks whether the contact structure of the link
suffices to characterize smoothness, and proves that this is the case conditionally to
a conjecture of Shokurov on minimal log discrepancies \cite{Sho02}.
This in particular implies that the property holds in dimension 3 and for locally complete intersection
singularities. 
More cases were later established in \cite{dFT17}.

Here we use CR geometry and \cref{th:Plateau-CY}
to prove the following general result (see \cref{th:CR-link-invariant} for a more precise statement)

\begin{introtheorem}
\label{th:link-invariant}
There exists a link-theoretic analytic invariant of singularity which distinguishes smooth points
among normal isolated singularities.
\end{introtheorem}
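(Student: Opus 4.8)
The plan is to construct the invariant by transporting the intrinsic CR-geometric quantity $g^{(\wedge^n1)}(M)$ through the correspondence between links of isolated singularities and strongly pseudoconvex CR manifolds. First I would recall that if $(X,x)$ is a normal isolated singularity of complex dimension $n$, then a small sphere around $x$ intersects $X$ in a compact strongly pseudoconvex CR manifold $M = L(X,x)$ of dimension $2n-1$, well-defined up to CR isomorphism; moreover $X$ is recovered, near $x$, as (the normalization of) the Stein filling of $M$ in the sense of the introduction. Thus the analytic germ $(X,x)$ is smooth if and only if its link $M$, viewed as an abstract embeddable CR manifold, has Stein filling with smooth normalization. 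The key point is therefore to exhibit a numerical invariant \emph{of the CR manifold $M$ alone} whose vanishing detects exactly this.

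For $n \ge 3$, in the Calabi--Yau case, \cref{th:Plateau-CY} does precisely this: the normalization of $V$ is smooth if and only if $g^{(\wedge^n1)}(M)=0$. So the core of the argument is to reduce the general normal isolated singularity to one whose link is Calabi--Yau. Here I would use that any normal isolated singularity $(X,x)$ admits an index-one cover (a finite cyclic cover branched only over $x$) which is Gorenstein, hence whose link carries a nowhere-vanishing CR $(n,0)$-form and is therefore a Calabi--Yau CR manifold; and the cover is smooth at its distinguished point if and only if $(X,x)$ was smooth. Since the index-one cover and its covering degree are determined by the link $M$ — the local fundamental group and the torsion of the canonical class are link-theoretic — one can phrase the whole recipe intrinsically: from $M$ pass to the link $\widetilde M$ of the index-one cover, and set the invariant to be, say, $g^{(\wedge^n1)}(\widetilde M)$ together with the datum of the cover. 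Applying \cref{th:Plateau-CY} to $\widetilde M$ then shows this vanishes if and only if the cover, hence $(X,x)$, is smooth. In dimension $n=2$ one instead invokes \cref{th:Plateau-hypersurface} via the index-one cover, which in the surface case is locally complete intersection precisely when one needs it, or else falls back on the Brieskorn--Mumford fact that the topology of the link already detects smoothness — but since the theorem only asserts existence of \emph{an} invariant, it suffices to handle each dimension by whichever of the two introductory theorems applies, and to observe that $n=2$ is covered by the classical result.

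The main obstacle I anticipate is not the logical skeleton but the bookkeeping needed to make the recipe genuinely \emph{link-theoretic and analytic}: one must check that every ingredient — the index-one cover, its degree, the resulting CR structure on $\widetilde M$, and the space $\G_h(\widetilde M, \wedge^n \^T(\widetilde M)^*)$ of global holomorphic cross sections — depends only on the CR-isomorphism class of $M$ and not on the chosen realization of $(X,x)$ or on the radius of the sphere. The cover and its degree are topological, so that part is routine; the more delicate point is that the Stein filling, and hence the finite-dimensionality and the precise value of $g^{(\wedge^n1)}$, is genuinely an invariant of the abstract embeddable CR manifold — this follows from the uniqueness of the normal Stein filling with boundary regularity recalled in the introduction, together with the fact that $\G_h$ is defined purely by the tangential Cauchy--Riemann equations on $M$. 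Assembling these observations, and packaging the pair (cover degree, $g^{(\wedge^n1)}(\widetilde M)$) as a single invariant that vanishes exactly at smooth points, yields \cref{th:link-invariant}; the sharper formulation promised by \cref{th:CR-link-invariant} would then spell out this package explicitly.
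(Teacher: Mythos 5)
Your proposal has two genuine gaps that make it unworkable as written.

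First, your opening claim that the link $M=L(X,x)$ is ``well-defined up to CR isomorphism'' is false, and the paper goes out of its way to warn against this: the CR structure of the link depends on the radius and the embedding, and in general \emph{changes} with these choices. The contact structure is an invariant of the germ, but the CR structure is not. This is precisely why the paper introduces the notion of \emph{semirigid CR deformation equivalence} (\cref{d:CR-family}--\cref{d:CR-equiv}) before it can even state what the invariant is: the link-theoretic datum is not the CR manifold itself but its equivalence class under deformations that preserve $g^{(\wedge^n1)}$ and (for $n\ge 3$) $\dim H_\KR^{0,1}$, and the content of \cref{t:CR} is that all links of isomorphic germs lie in a single class. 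Your appeal to uniqueness of the normal Stein filling shows that the numerical quantity $g^{(\wedge^n1)}(M)$ only depends on the germ, which is true and used in the paper, but it does not by itself address what structure on the link one should declare the invariant of.

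Second, and more seriously, the reduction to the Calabi--Yau case via index-one covers does not work for arbitrary normal isolated singularities. An index-one cover exists only when the canonical class is $\Q$-Cartier, i.e.\ for $\Q$-Gorenstein singularities; a general normal isolated singularity need not be $\Q$-Gorenstein, so there is no finite cyclic cover that renders it Gorenstein and hence no ``Calabi--Yau link'' to feed into \cref{th:Plateau-CY}. The paper avoids this obstruction altogether by a different mechanism: it supposes for contradiction that the link is semirigid CR deformation equivalent to the standard sphere, produces (via \cite{HLY06} and the $H_\KR^{0,1}=0$ condition) a one-parameter Stein family $\U\to\D$ with smooth contractible general fiber $\U_t$ and singular normal central fiber $\U_0$, and then uses the triviality of $\Om_{\U_t}^{[n]}$ for $t\ne 0$ plus adjunction and the linear triviality of the divisor $\U_0\subset\U$ to force $\Om_{\U_0}^{[n]}\cong\O_{\U_0}$. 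It is the degeneration to a smooth contractible fiber, not any cyclic cover, that manufactures the Calabi--Yau condition on $\M_0$ needed to apply \cref{th:Plateau-CY}. Without this idea, or a genuine replacement for it, the non-$\Q$-Gorenstein case is left uncovered and the proposal does not close.
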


The CR structure of a given link of a singularity further enhances the contact structure
but, differently from the latter, it does not define an invariant of the singularity as in general 
it depends on the embedding of the singularity and the radius of the link.
In order to define an invariant of singularity using CR geometry, 
it is necessary to introduce an equivalence
relation among strongly pseudoconvex CR manifolds. 
The notion of cohomologically rigid CR deformation equivalence was introduced in \cite{dFT17} for this purpose. 
The `cohomolocally rigid' part of the term refers to the condition that certain Kohn--Rossi cohomological dimensions
are preserved under deformation, and was included in the definition with the results of \cite{Yau81} in mind. 
The results of \cite{DGY16,Du} and of this paper indicate that such notion
needs to be adjusted. 
We do this in the last section of the paper, where we replace most of the conditions on Kohn--Rossi cohomology
with an analogous condition on the invariant $g^{(\wedge^n 1)}$ defined above
and prove that the resulting equivalence relation defines a link-theoretic invariant of 
normal isolated singularities
which can be used to distinguish smooth points from singular ones.

\subsection*{Acknowledgements}

We thank Roi Docampo for many valuable discussions, especially on the topic of
Nash tranformations, and Hugo Rossi and Stephen Yau for historical comments
and for their interest in our work. 
We wish to express our indebtedness to the papers \cite{DGY16,Du}
which have inspired this work, and to Stephen Yau for bring to our attention
his work on the complex Plateau problem in connection to our previous
paper on links of singularities \cite{dFT17}.
We thank the referee for suggesting relevant references.

\section{Preliminaries}

In this section we briefly recall some basic definitions about CR manifolds; 
general references are \cite{Tan75,DT06}.
All manifolds will be implicitly assumed to be $C^\infty$, connected, and orientable.
We will only consider CR manifolds of CR codimension 1.

A CR structure on a manifold $M$ with real dimension $2n-1$ is an
$(n-1)$-dimensional subbundle $T_{1,0}(M)$
of the complexified tangent bundle $\C T(M) := T(M) \otimes \C$ that is closed under Lie bracket and satisfies
$T_{1,0}(M)\cap T_{0,1}(M) = {0}$, where $T_{0,1}(M) := \ov{T_{1,0}(M)}$.
A manifold $M$ equipped with a CR structure $T_{1,0}(M)$ is called a CR manifold. 
As long as $T_{1,0}(M)$ is clear from the context or implicit in the discussion, 
we simply write $M$ to denote the CR manifold.

The Levi distribution of a CR manifold $M$ is the subbundle
$H(M) = \Re(T_{1,0}(M) \oplus T_{0,1}(M))$ of $T(M)$.
A pseudo-Hermitian structure on $M$ consists of a global differential 1-form $\theta$
on $M$ such that $H(M) = \ker\theta$.
A CR manifold $M$ is said to be strongly pseudoconvex
if for some pseudo-Hermitian structure $\theta$ the Levi form $-i\,d\theta$ is positive definite, 
that is, $-i\,d\theta(Z,\bar Z) > 0$ for every nonvanishing $\cC^\infty$ local section $Z$ of $T_{1,0}(M)$.

Let $M$ be a CR manifold with structure $T_{1,0}(M)$. 
In the following paragraphs, we work locally on $M$ 
and use the symbol $\G$ to denote spaces of $\cC^\infty$ sections
over an unspecified open subset of $M$. 
We follow \cite[Section~1.2]{Tan75}, to which we refer for more details. 

The operator 
\[
\bar\de_b \colon \G(M\times\C) \to \G(T_{0,1}(M){}^*)
\]
defined by setting $\bar\de_b u (\bar Z) := \bar Zu$ for any $Z \in \G(T_{1,0}(M))$
is called the (tangential) Cauchy--Riemann operator, and  
a solution to the equation $\bar\de_b u = 0$ is called a holomorphic function.

A complex vector bundle $E$ over $M$ is said to be holomorphic
if there is a differential operator 
\[
\bar\de_E \colon \G(E) \to \G(E \otimes T_{0,1}(M){}^*)
\]
satisfying the conditions
$\bar Z(fu) = \bar Zf\.u + f\. \bar Zu$ and $[\bar Z,\bar W]u = \bar Z\bar W u - \bar W \bar Z u$
for all $u \in \G(E)$, $f \in \cC^\infty(M)$, $Z,W\in \G(T_{1,0}(M))$, 
where we put $\bar Zu := \bar\de_Eu(\bar Z)$. 
The operator $\bar\de_E$ is called the Cauchy--Riemann operator of $E$, and a solution 
to the equation $\bar\de_E u = 0$ is called a holomorphic cross section of $E$. 

The holomorphic tangent bundle $\^T(M) := \C T(M)/T_{0,1}(M)$
is a complex vector bundle of rank $n$ (where $2n-1$ is the real dimension of $M$)
with holomorphic structure given by the Cauchy--Riemann operator $\bar\de = \bar\de_{\^T(M)}$
defined as follows:
for a given $u \in \G(\^T(M))$, pick $U \in \G(\C T(M))$ mapping to $u$ under
the quotient map $\p \colon \C T(M) \to \^T(M)$, and define $\bar\de u$ by setting
$\bar\de u(\bar Z) := \p([\bar Z,U])$ for every $Z \in \G(T_{1,0}(M))$. 
One checks that this definition is independent of the choice of $U$
and the resulting operator $\bar\de$ satisfies the above conditions.
If $M$ is a real hypersurface on an $n$-dimensional complex manifold $X$ and
$T_{1,0}(M) = T^{1,0}(X) \cap \C T(M)$ is the CR structure induced from the complex structure of $X$ 
(here $T^{1,0}(X)$ is the holomorphic tangent bundle of $X$), 
then we have $\^T(M) = T^{1,0}(X)|_M$.
A compact CR manifold $M$ of dimension $2n-1$ is said to be 
Calabi--Yau if the complex line bundle $\wedge^n\^T(M)^*$, which is called the canonical bundle of $M$,
is trivial as a holomorphic bundle.

The Kohn--Rossi cohomology of a CR manifold $M$ is defined using the bundles $\wedge^k\^T(M)^*$. 
For every $k$ and $p$, denote
by $F^p(\wedge^k\^T(M)^*)$ the subbundle of $\wedge^k\^T(M)^*$ consisting of those local sections
$\a$ which satisfy the equation 
\[
\a(Z_1,\dots,Z_{p-1},\bar W_1,\dots,\bar W_{k-p+1}) = 0
\]
for all $Z_i \in \C T(M)_x$ and $W_j \in T_{1,0}(M)_x$, 
where $x \in M$ is the origin of $\a$. 
The collection $\{F^p(\wedge^k\^T(M)^*)\}$ gives a filtration of the de Rham complex, 
and the Kohn--Rossi cohomology groups $H_\KR^{p,q}(M)$ are computed by the
groups $E_1^{p,q}$ in the first page of the spectral sequence
associated to this filtration.
Equivalently, for every fixed $p$ the Cauchy--Riemann operator of $\wedge^p\^T(M)^*$ induces operators
\[
\G(\wedge^p\^T(M)^* \otimes \wedge^q T_{0,1}(M){}^*) \to 
\G(\wedge^p\^T(M)^* \otimes \wedge^{q+1} T_{0,1}(M){}^*)
\]
for all $q$. These operators fit into a complex
and $H_\KR^{p,q}(M)$ is the $q$-th cohomology group of this complex.

\section{Proofs of \cref{th:Plateau-CY,th:Plateau-hypersurface}}

The proofs combine results from \cite{DGY16} with an idea from \cite{dFT17}. 
Throughout this section, let
$M \subset \C^N$ be a strongly pseudoconvex compact CR manifold of dimension $2n-1 \ge 2$
that is contained in the boundary of a strongly pseudoconvex bounded domain of $\C^N$,  
and let $V \subset \C^N \setminus M$ be 
the Stein filling of $M$ with boundary regularity and isolated singularities. 

Let $U = V \setminus V_\sing$ denote the smooth
locus of $V$, with inclusion $i \colon U \inj V$.
The authors of \cite{DGY16} work with the sheaves
\[
\bar{\bar\Om}_V^{p} := i_*\Om_U^p.
\]
It is proved in \cite[Theorem~A]{Siu70} that these sheaves are coherent.
Here we work directly on the normalization $\m \colon \~V \to V$. After identifying $U$ with 
its image $\m^{-1}(U)$ in $\~V$, 
we denote by $j \colon U \inj \~V$ the inclusion in $\~V$. 
On $\~V$, we consider the sheaves of reflexive differentials
\[
\Om_{\~V}^{[p]} := (\Om_{\~V}^p)^{**}.
\]
Note that $\Om_{\~V}^{[p]} = j_*\Om_U^p$, since $\~V$ is normal.
There is a natural map $\wedge^p\Om_{\~V}^{[1]} \to \Om_{\~V}^{[p]}$
whose cokernel is supported within the singular locus of $\~V$.
Since $\~V$ has isolated singularities, we can define the dimension of the cokernel
as the sum of the dimensions of its non-zero fibers.

\begin{lemma}
\label{lem:g(M)}
We have
\[
g^{(\wedge^p1)}(M) = \dim \coker\big(\wedge^p\Om_{\~V}^{[1]} \to \Om_{\~V}^{[p]}\big).
\]
\end{lemma}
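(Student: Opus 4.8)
The plan is to reduce the identity to a statement about global holomorphic sections by exploiting the correspondence between holomorphic data on the CR manifold $M$ and holomorphic data on its Stein filling. First I would recall, from the setup of \cite{DGY16} and the theory of CR manifolds as boundaries of Stein spaces (Rossi, Harvey--Lawson, Kohn--Rossi), that there is a natural isomorphism
\[
\G_h(M,\wedge^p\^T(M)^*) \;\cong\; H^0(\~V,\Om_{\~V}^{[p]}).
\]
The point is that a holomorphic cross section of $\wedge^p\^T(M)^*$ extends holomorphically across $M$ into the interior, and since $\~V$ is normal with only isolated singularities, such an extension is exactly a section of the reflexive sheaf $\Om_{\~V}^{[p]} = j_*\Om_U^p$; conversely restriction to $M$ recovers the CR section. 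This is essentially the identification already used implicitly in \cite{DGY16} when they work with $\bar{\bar\Om}_V^p = i_*\Om_U^p$, combined with the fact that $\m_*\Om_{\~V}^{[p]} = \bar{\bar\Om}_V^p$ on the normal(izing) locus; here I phrase everything directly on $\~V$. For $p=1$ the same isomorphism reads $\G_h(M,\^T(M)^*) \cong H^0(\~V,\Om_{\~V}^{[1]})$.

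Next I would check that these isomorphisms are compatible with wedge product, i.e.\ that the diagram
\[
\xymatrix{
\wedge^p\G_h(M,\^T(M)^*) \ar[r] \ar[d]^{\cong} & \G_h(M,\wedge^p\^T(M)^*) \ar[d]^{\cong} \\
\wedge^p H^0(\~V,\Om_{\~V}^{[1]}) \ar[r] & H^0(\~V,\Om_{\~V}^{[p]})
}
\]
commutes, where the top horizontal map is the one appearing in the definition of $g^{(\wedge^p1)}(M)$ and the bottom one is induced by $\wedge^p\Om_{\~V}^{[1]} \to \Om_{\~V}^{[p]}$ on global sections. This is a local/natural computation: wedging CR cross sections corresponds under holomorphic extension to wedging their extensions, because extension is a ring/module homomorphism for the natural products, and both products are the obvious exterior products in local holomorphic coordinates on $U$. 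Taking cokernels of the two horizontal maps, it follows that
\[
g^{(\wedge^p1)}(M) \;=\; \dim\coker\big(\wedge^p H^0(\~V,\Om_{\~V}^{[1]}) \to H^0(\~V,\Om_{\~V}^{[p]})\big).
\]

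Finally I would pass from the cokernel on global sections to the cokernel of the sheaf map $\wedge^p\Om_{\~V}^{[1]}\to\Om_{\~V}^{[p]}$. Since $\~V$ is Stein, $H^1(\~V,\wedge^p\Om_{\~V}^{[1]}) = H^1(\~V,\ker) = \cdots$ — more precisely, letting $\cK$ and $\cQ$ denote the kernel and cokernel sheaves of $\wedge^p\Om_{\~V}^{[1]}\to\Om_{\~V}^{[p]}$, both supported on the isolated singular set, Cartan's Theorem B gives $H^1(\~V,\cdot)=0$ for every coherent sheaf, so the long exact sequence in cohomology yields a short exact sequence $0 \to H^0(\wedge^p\Om_{\~V}^{[1]}) \to H^0(\Om_{\~V}^{[p]}) \to H^0(\cQ) \to 0$ on the image, hence $\coker$ on global sections equals $H^0(\~V,\cQ)$. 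Because $\cQ$ is supported on finitely many points, $H^0(\~V,\cQ)$ is just the direct sum of the stalks $\cQ_x$, whose total dimension is by definition $\dim\coker(\wedge^p\Om_{\~V}^{[1]}\to\Om_{\~V}^{[p]})$. Combining the three displayed equalities gives the lemma.

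The main obstacle is the first step: making the isomorphism $\G_h(M,\wedge^p\^T(M)^*) \cong H^0(\~V,\Om_{\~V}^{[p]})$ precise and checking it is an \emph{isomorphism} (not merely injective), together with its compatibility with wedge products. Injectivity — that a holomorphic cross section vanishing nowhere on $M$, or rather that the boundary values of an interior holomorphic form determine it — follows from the maximum principle / uniqueness of holomorphic extension; surjectivity requires that every reflexive holomorphic $p$-form on $\~V$ restricts to a genuine CR section on $M$ satisfying the tangential Cauchy--Riemann equations, which is where boundary regularity of the Stein filling and the identification $\^T(M) = T^{1,0}(\C^N)|_M$ (via the ambient embedding, modulo the normalization) enter. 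I expect that \cite{DGY16} already establishes the needed dictionary at the level of $\bar{\bar\Om}_V^p$, so the bulk of the work is transcribing it to $\~V$ and keeping track of the wedge-compatibility; the cohomological step via Theorem B is routine.
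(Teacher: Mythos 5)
Your argument is a reconstruction of what the paper calls the alternative route, namely to ``directly mimic the proof of \cite[Lemma~4.7]{DGY16}.'' The paper's own proof is a quick reduction: quote \cite[Lemma~4.7]{DGY16}, which already gives $g^{(\wedge^p1)}(M) = \dim\coker\bigl(\wedge^p\bar{\bar\Om}_V^1 \to \bar{\bar\Om}_V^p\bigr)$ on $V$ itself, and then convert to the formula on $\~V$ via the identity $\bar{\bar\Om}_V^{p} = \m_*\Om_{\~V}^{[p]}$ and the Leray spectral sequence (the essential point being that the normalization $\m$ is finite, so it has no higher direct images and the cokernel dimensions on $V$ and $\~V$ agree). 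Your version instead re-derives the boundary-to-interior dictionary $\G_h(M,\wedge^p\^T(M)^*)\cong H^0(\~V,\Om_{\~V}^{[p]})$ directly on $\~V$, which is precisely the content of the DGY16 argument and is not free, and then finishes with Cartan's Theorems; this is a legitimate and more self-contained exposition at the cost of duplicating \cite{DGY16}. One step in your Theorem~B passage should be made explicit: the long exact sequences as you set them up only identify $\coker\bigl(H^0(\wedge^p\Om_{\~V}^{[1]}) \to H^0(\Om_{\~V}^{[p]})\bigr)$ with $H^0(\cQ)$, whereas what you need is the cokernel of $\wedge^p H^0(\~V,\Om_{\~V}^{[1]}) \to H^0(\~V,\Om_{\~V}^{[p]})$. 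The missing ingredient is the surjectivity of $\wedge^p H^0(\~V,\Om_{\~V}^{[1]}) \to H^0(\~V,\wedge^p\Om_{\~V}^{[1]})$; this does hold on the Stein space $\~V$ (present $\Om_{\~V}^{[1]}$ as a quotient of a finite free module via Theorem~A and apply Theorem~B to the induced presentation of $\wedge^p\Om_{\~V}^{[1]}$), but it is not covered by the parenthetical ``on the image'' in your write-up and should be spelled out.
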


\begin{proof}
This follows directly from \cite[Lemma~4.7]{DGY16}, which states that
\[
g^{(\wedge^p1)}(M) = \dim \coker\big(\wedge^p\bar{\bar\Om}_V^1 \to \bar{\bar\Om}_V^p\big).
\]
The equivalence of the two formulas can be seen using the Leray spectral sequence and the equation
$\bar{\bar\Om}_V^{p} = \m_*\Om_{\~V}^{[p]}$.
Alternatively, one can directly mimic the proof of \cite[Lemma~4.7]{DGY16}.
\end{proof}

\begin{lemma}
\label{lem:CY}
$M$ is Calabi--Yau if and only if $\Om_{\~V}^{[n]} \cong \O_{\~V}$.
\end{lemma}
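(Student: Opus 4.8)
The plan is to identify the Calabi--Yau condition on $M$ with a triviality statement for the reflexive canonical sheaf on $\~V$, using the description $\^T(M) = T^{1,0}(X)|_M$ that holds near the boundary together with the fact that $M$ is the boundary of the regular locus $U$. First I would recall that, because $V$ has boundary regularity, a collar neighborhood of $M$ in the closure of $U$ is a smooth complex manifold with boundary $M$, and along $M$ one has a canonical identification $\^T(M)^* \cong \Om^1_U|_M$ of holomorphic bundles in the CR sense (this is the hypersurface identity $\^T(M) = T^{1,0}(X)|_M$ from the Preliminaries applied to $X = U$ near its boundary). Taking $n$-th exterior powers gives $\wedge^n\^T(M)^* \cong \Om^n_U|_M = \Om^{[n]}_{\~V}|_M$, and since this is an isomorphism of holomorphic line bundles, $M$ is Calabi--Yau precisely when the restriction $\Om^{[n]}_{\~V}|_M$ is holomorphically trivial.

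The second step is to propagate triviality from the boundary $M$ to all of $\~V$. Since $\~V$ is a normal Stein space with isolated singularities and $\Om^{[n]}_{\~V}$ is a reflexive rank-one sheaf, it suffices to produce a nowhere-vanishing section of $\Om^{[n]}_{\~V}$ on all of $\~V$ from one on a neighborhood of $M$. Here I would invoke Hartogs-type extension across the isolated singular points: a holomorphic section of the reflexive sheaf $\Om^{[n]}_{\~V} = j_*\Om^n_U$ defined on $U$ near $M$ extends over $U$ entirely because $U$ is the smooth locus of a Stein space and the complement of a neighborhood of $M$ in $\~V$ is compact, so a section of a line bundle on the smooth locus extends to a section of the reflexive extension across the (isolated, hence codimension $\ge 2$) singular points. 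The nonvanishing is preserved since the divisor of zeros would be a compact analytic subset of the Stein space $\~V$, forcing it to be empty. Conversely, if $\Om^{[n]}_{\~V} \cong \O_{\~V}$, restricting to a collar neighborhood of $M$ and using the boundary identification above shows $\wedge^n\^T(M)^*$ is trivial, so $M$ is Calabi--Yau.

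The main obstacle I anticipate is making the boundary identification $\wedge^n\^T(M)^* \cong \Om^{[n]}_{\~V}|_M$ precise as an isomorphism of \emph{holomorphic} objects in the CR sense, i.e.\ compatible with $\bar\de_b$ on the $M$ side and with the restriction of the ambient holomorphic structure on the $U$ side; this requires care because the CR-holomorphic sections of $\wedge^n\^T(M)^*$ must be matched with germs along $M$ of holomorphic $n$-forms on $U$, which is exactly the content of the boundary regularity theorem of Harvey--Lawson and the Kohn--Rossi--type identification of $\G_h(M,\wedge^p\^T(M)^*)$ with $H^0$ of a neighborhood of $M$ in $U$. Once this identification is in place (it is implicit in the machinery of \cite{DGY16}, and in particular in the proof of \cite[Lemma~4.7]{DGY16} used above), the rest is the standard Stein/Hartogs extension argument. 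I would therefore structure the proof as: (i) cite boundary regularity and the CR-to-ambient dictionary to get triviality of $\wedge^n\^T(M)^*$ $\iff$ triviality of $\Om^{[n]}_{\~V}$ on a neighborhood of $M$; (ii) use that $\Om^{[n]}_{\~V}$ is reflexive, $\~V$ is Stein with isolated singularities, and connectedness/compactness to extend and conclude global triviality.
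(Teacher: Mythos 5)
Your proposal is essentially the same as the paper's proof: both rely on the extension theorem (from \cite[Lemma~4.6]{DGY16}) to identify CR-holomorphic sections of $\wedge^n\^T(M)^*$ with holomorphic $n$-forms on a collar of $M$ in $U$, then use normality/reflexivity (and a Hartogs-type argument over the compact interior) to extend a nowhere-vanishing such section across all of $\~V$, and finally rule out interior zeros by noting that the zero divisor would be a positive-dimensional analytic set not reaching $M$, hence compact in the Stein space $\~V$, hence empty — which is exactly the paper's ``$Z$ has no boundary in $M$'' argument phrased in slightly different language. Your converse direction (restricting a trivialization of $\Om_{\~V}^{[n]}$ to the boundary) also matches the paper.

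One small caveat worth tightening in your write-up: the phrase ``Hartogs-type extension across the isolated singular points'' conflates two distinct extension steps. The extension from a collar of $M$ to all of $U$ fills a full-dimensional compact hole (a Hartogs phenomenon for Stein manifolds of dimension $\ge 2$, not a codimension-$\ge 2$ statement), while passing from $U$ to $\~V$ is the codimension-$\ge 2$ extension encoded by $\Om_{\~V}^{[n]} = j_*\Om_U^n$. Keeping these separate would make the argument cleaner, but the overall plan is sound and agrees with the paper.
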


\begin{proof}
This fact is essentially explained in \cite{DGY16} (see the remark following
\cite[Definition 2.6]{DGY16}).
The proof of \cite[Lemma~4.6]{DGY16} goes by showing that any nonzero holomorphic cross section $\theta$ of 
$\wedge^p\^T(M)^*$ extends holomorphically to a one side neighborhood of $M$ in $U$ and hence to
a global section of $\Om_U^p$. 
Since the natural map $\G(U,\Om_U^p) \to \G(\~V,\Om_{\~V}^{[p]})$ is an isomorphism, 
this section further extends to a global section $\theta'$ of $\Om_{\~V}^{[p]}$. 
Here we take $p=n$.
As $\Om_{\~V}^{[n]}$ is a reflexive sheaf of rank 1 on a normal space, 
the section $\theta' \colon \O_{\~V} \to \Om_{\~V}^{[n]}$
is either an isomorphism or it has cokernel supported on an analytic hypersurface $Z \subset \~V$
which, being positive dimensional, will have a non-empty boundary within $M$. 
If we assume that $\theta$ is nowhere vanishing, then we have $Z = \emptyset$
and hence $\theta'$ gives a trivialization $\Om_{\~V}^{[n]} \cong \O_{\~V}$. 
Conversely, a trivialization $\Om_{\~V}^{[n]}$, viewed as a holomorphic bundle over $\~V$, yields 
a holomorphic trivialization of $\wedge^n\^T(M)^*$.
\end{proof}

The idea we take from \cite{dFT17} is to use Nash transformations
to detect smoothness. In \cite{dFT17} we used the Nash transformation of 
the sheaf of differentials $\Om^1$, 
which is also known as the Nash blow-up of the variety. 
Here, we use the Nash transformation of sheaf of reflexive differentials $\Om^{[1]}$ instead. 
In the first case we relied on a theorem of Nobile \cite{Nob75} stating that the Nash blow-up of 
a complex analytic variety is an isomorphism if and only if the variety is smooth. 
Here we rely on known cases of the Zariski--Lipman conjecture to deduce the analogous property
for the Nash transformation of $\Om^{[1]}$.  

In algebraic geometry, the Nash transformation of
a coherent sheaf $\cF$ of rank $r$ on a Noetherian scheme $S$ 
is the universal projective birational morphism 
$\n \colon N(\cF) \to S$ such that the pull-back sheaf $\n^*\cF$ admits a
locally free quotient of the same rank $r$. 
The construction, which essentially goes back to Grothendieck, uses the 
Grassmannian bundle $\Gr(\cF,r)$ 
parametrizing locally free quotients of $\cF$ of rank $r$.
The details of the construction and basic properties can be found in \cite{OZ91}.
Notice that the Grassmannian bundle $\Gr(\cF,r)$ is defined as a scheme over $S$
via its functor of points \cite[Ch.~I, Th\'eor\`eme~9.7.4]{GD71}. 

Working with coherent sheaves on a complex analytic variety, 
we cannot rely on the language of schemes, but
it is not hard to generalize the construction to the analytic setting.
The argument is standard, and is included here for the convenience of the reader.

Let $\cF$ be a coherent sheaf of rank $r$ on a complex analytic variety $X$. 
In order to construct the Grassmannian bundle of $\cF$, 
we first assume that $\cF$ is generated by finitely many global holomorphic sections. 
We fix a set of generators $s_1,\dots,s_m$
of $\cF$ and look at the surjection
\[
\f \colon \O_X^{\oplus m} \surj \cF.
\]
defined by these sections.
We denote by $\Gr(\C^m,r)$ the Grassmannian variety
of quotient maps $\C^m \surj \C^r$. Let $\p \colon X \times \Gr(\C^m,r) \to X$
be the projection onto the first factor, and consider the sequence
\[
0 \to \cS \to \p^*\O_X^{\oplus m} \to \cQ \to 0
\]
induced by the universal sequence of $\Gr(\C^m,r)$. We define
the Grassmannian bundle of rank $r$ locally free quotients of $\cF$ 
to be the subset
\[
\Gr(\cF,r) \subset X \times \Gr(\C^m,r)
\]
where the induced map $\p^*\ker(\f) \to \cQ$ is zero. 
This set is locally defined by finitely many holomorphic
sections of $\cQ$, and hence it is an analytic subset
of $X \times \Gr(\C^m,r)$.
Over the open and dense set $U \subset X$ where $\cF$ is a locally
free sheaf of rank $r$, the induced map $\Gr(\cF|_U,r) \to U$
is an isomorphism. Denote by $N(\cF)$ the irreducible component of $\Gr(\cF,r)$
containing $\Gr(\cF|_U,r)$. We define the Nash transformation of $\cF$
to be the induced map
\[
\n \colon N(\F) \to X.
\]
Note that $N(\F)$, being an irreducible component of 
a closed analytic subset, is a closed analytic subvariety of $X \times \Gr(\C^m,r)$, 
and $\n$ is a proper bimeromorphic morphism.

\begin{lemma}
\label{lem:universal-property}
The pull-back $\n^*\cF$ admits a locally free quotient of rank $r$, 
and $\n$ is universal 
among proper bimeromorphic morphisms of analytic varieties with this property. 
\end{lemma}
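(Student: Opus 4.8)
The plan is to mimic the scheme-theoretic argument, the key point being to reduce to the standard universal property of the Grassmannian in the analytic category. First I would verify the first assertion: over the open dense $U \subset X$ on which $\cF$ is locally free of rank $r$ we already know $\n^{-1}(U) \cong U$ and the restricted surjection $\n^*\cF \to \cQ|_{N(\cF)}$ is an isomorphism there; the content is to check that $\cQ|_{N(\cF)}$ is a \emph{quotient} of $\n^*\cF$ globally. By construction $N(\cF) \subset X \times \Gr(\C^m,r)$ is contained in the locus where $\p^*\ker(\f) \to \cQ$ vanishes, so the composite $\n^*\O_X^{\oplus m} = \p^*\O_X^{\oplus m}|_{N(\cF)} \to \cQ|_{N(\cF)}$ kills the image of $\p^*\ker(\f)$, hence factors through $\n^*\cF = \n^*\coker(\ker \f \to \O_X^{\oplus m})$. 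Since $\p^*\O_X^{\oplus m} \to \cQ$ is surjective (it is part of the universal exact sequence), so is the induced map $\n^*\cF \to \cQ|_{N(\cF)}$, which is locally free of rank $r$.

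Next I would establish the universal property. Let $h \colon Y \to X$ be a proper bimeromorphic morphism of analytic varieties such that $h^*\cF$ admits a locally free rank-$r$ quotient $h^*\cF \to \cG \to 0$. Composing with $h^*\f$ gives a surjection $h^*\O_X^{\oplus m} = \O_Y^{\oplus m} \to \cG$, i.e.\ a holomorphic map $g \colon Y \to \Gr(\C^m,r)$ together with the projection $h$, and hence a morphism $(h,g) \colon Y \to X \times \Gr(\C^m,r)$ with $(h,g)^*\cQ \cong \cG$ compatibly. Because $h^*\ker(\f) \to h^*\cF \to \cG$ is zero, the induced map $(h,g)^*(\p^*\ker \f \to \cQ)$ vanishes, so $(h,g)$ factors set-theoretically through $\Gr(\cF,r)$; since $Y$ is irreducible (being bimeromorphic to the irreducible $X$) and $(h,g)$ maps the dense locus $h^{-1}(U)$ into $\Gr(\cF|_U,r)$, the image lands in the irreducible component $N(\cF)$, yielding a morphism $\tilde h \colon Y \to N(\cF)$ with $\n \circ \tilde h = h$ and $\tilde h^*(\cQ|_{N(\cF)}) \cong \cG$. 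Uniqueness of $\tilde h$ follows because $\tilde h$ is determined on the dense open $h^{-1}(U)$ and $N(\cF)$ is separated, so two such factorizations agree.

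The main obstacle, and the only nonroutine step, is handling the hypothesis that $\cF$ is globally generated by finitely many sections: in the analytic setting this need not hold on $X$ itself. I would address this exactly as one does for Nash blow-ups — the construction is local on $X$, and the universal property is checked locally, so one covers $X$ by (relatively compact) open subsets on which $\cF$ \emph{is} finitely generated by global sections, performs the construction there, and glues. The gluing is legitimate because on overlaps the two local constructions satisfy the \emph{same} universal property (locally free rank-$r$ quotient, universal among proper bimeromorphic maps), and a solution to a universal problem is unique up to canonical isomorphism; these canonical isomorphisms automatically satisfy the cocycle condition, producing a global $\n \colon N(\cF) \to X$ whose formation commutes with the given restrictions. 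One then reads off both the existence of the locally free quotient of $\n^*\cF$ and the universal property from the local statements, noting that a morphism $\tilde h$ with the required properties, being locally unique, is globally unique and glues.
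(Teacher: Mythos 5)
Your proof is correct and follows essentially the same line as the paper's: produce the locally free quotient of $\n^*\cF$ by observing that $\p^*\ker(\f) \to \cQ$ dies after restriction to $N(\cF)$ (the paper packages this as the existence of the lift $\iota$ into $\cS|_{N(\cF)}$ in a commutative diagram, which is equivalent), and then read off the universal property from that of the Grassmannian, using irreducibility and density of $h^{-1}(U)$ to land in the component $N(\cF)$ and to get uniqueness. One small remark on scope: the lemma is stated and proved in the paper under the standing assumption, set up in the preceding paragraph, that $\cF$ is generated by finitely many global sections; the gluing argument in your final paragraph is precisely what the paper carries out \emph{after} the lemma to globalize the construction, so it is not part of this lemma's proof, though the reasoning you give for it is sound.
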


\begin{proof}
There is a commutative diagram of $\O_{N(\cF)}$-modules with exact rows as follows:
\[
\xymatrix{
0 \ar[r] & \cS|_{N(\cF)} \ar[r] 
& (\p^*\O_X^{\oplus m})|_{N(\cF)} \ar@{=}[d] \ar[r] & \cQ|_{N(\cF)} \ar[r] & 0 \\
0 \ar[r] & \n^*\ker(\f) \ar[r] \ar@{^(->}[u]^\iota 
& \n^*\O_X^{\oplus m} \ar[r] & \n^*\cF \ar@{->>}[u]_\t \ar[r] & 0
}.
\]
The sheaf $\cQ|_{N(\cF)}$ is locally free
of rank $r$, and the sheaf $\n^*\ker(\f)$ is torsion free and agrees with $\cS|_{N(\cF)}$ over
the open set $U$ where $\cF$ is locally free. The existence of the injection $\iota$ 
follows from the fact that the top row is locally split and $\n^*\ker(\f)$ is torsion free, 
and implies the existence of the surjection $\t$. 
The universal property follows by the universal property of the Grassmannian.
\end{proof}

This property allows us to remove the condition that $\cF$ is finitely generated
by global sections and globalize the construction of Nash transformation. 
Given any coherent sheaf $\cF$ on a complex analytic variety $X$, 
we find an open cover $\{U_i \subset X\}$ such that the restriction of
$\cF$ to each open $U_i$ is finitely generated by global sections. 
The transformations $N(\cF|_{U_i}) \to U_i$ are constructed as discussed above, 
and \cref{lem:universal-property} guarantees that these maps can be glued together to define
a morphism of analytic varieties $\n \colon N(\cF) \to X$, 
which we call the Nash transformation of $\cF$. Note that, by construction, 
isomorphic coherent sheaves have isomorphic Nash transformations. 
\cref{lem:universal-property} yields the following property. 

\begin{proposition}
\label{th:N(F)-analytic}
The Nash tranformation $\n \colon N(\cF) \to X$ of
a coherent sheaf $\F$ of rank $r$ 
on a complex analytic variety $X$ is a proper bimeromorphic morphisms
of analytic varieties, and is universal with respect to the property that 
$\n^*\cF$ has a locally free quotient of rank $r$.
\end{proposition}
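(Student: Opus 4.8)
The statement essentially packages together what was established in the paragraphs immediately preceding it, so the plan is to verify that the local construction glues and that the universal property passes from the local pieces to the global morphism. First I would recall that for a coherent sheaf $\cF$ of rank $r$ on $X$, the locus $U \subset X$ where $\cF$ is locally free of rank $r$ is open and dense, and that on each member $U_i$ of a suitable open cover of $X$ the restriction $\cF|_{U_i}$ is generated by finitely many global sections; this is a standard fact for coherent sheaves on analytic spaces. On each $U_i$ the map $\n_i \colon N(\cF|_{U_i}) \to U_i$ is then defined as in the excerpt, and $N(\cF|_{U_i})$ is a closed analytic subvariety of $U_i \times \Gr(\C^m,r)$, hence $\n_i$ is a morphism of analytic varieties; it is proper because $\Gr(\C^m,r)$ is compact and $N(\cF|_{U_i})$ is closed in $U_i \times \Gr(\C^m,r)$, and it is bimeromorphic because it restricts to an isomorphism over the dense open set $U \cap U_i$.

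Next I would invoke \cref{lem:universal-property}, which gives the universal property for each $\n_i$. The gluing step: on overlaps $U_i \cap U_j$, both $N(\cF|_{U_i})|_{U_i \cap U_j}$ and $N(\cF|_{U_j})|_{U_i \cap U_j}$ are proper bimeromorphic over $U_i \cap U_j$ with a rank-$r$ locally free quotient of the pullback of $\cF|_{U_i \cap U_j}$, so the universal property of \cref{lem:universal-property}, applied in both directions, produces mutually inverse isomorphisms over $U_i \cap U_j$ compatible with the maps to the base. These isomorphisms automatically satisfy the cocycle condition on triple overlaps, again by uniqueness in the universal property, so the local Nash transformations glue to a morphism of analytic varieties $\n \colon N(\cF) \to X$. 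Properness and bimeromorphy are local on the base and hence inherited from the $\n_i$; the locally free rank-$r$ quotient of $\n^*\cF$ is likewise obtained by gluing the local quotients, the gluing isomorphisms being exactly the ones just constructed.

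Finally, for the universal property of $\n$ itself: given any proper bimeromorphic morphism $f \colon Y \to X$ of analytic varieties such that $f^*\cF$ admits a locally free quotient of rank $r$, restrict to each $U_i$ and apply the universal property of $\n_i$ to get a unique factorization $f^{-1}(U_i) \to N(\cF|_{U_i}) = \n^{-1}(U_i)$ over $U_i$; by uniqueness these local factorizations agree on the overlaps $f^{-1}(U_i \cap U_j)$ and glue to a unique morphism $Y \to N(\cF)$ over $X$. I expect the only mildly delicate point to be checking that the gluing data are genuinely cocycles — but this is immediate from the uniqueness clause in \cref{lem:universal-property}, so there is no real obstacle; the proposition is a formal consequence of the local theory together with \cref{lem:universal-property}.
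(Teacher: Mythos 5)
Your proposal is correct and matches the paper's approach: the paper also treats this proposition as a direct consequence of the local construction and \cref{lem:universal-property}, with the gluing on overlaps justified by the uniqueness built into the universal property. You have merely spelled out the cocycle verification and the globalization of the universal property that the paper leaves implicit.
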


We will use the following elementary property.

\begin{lemma}
\label{lem:N(F)=N(detF)=N(F/tor)}
Let $\F$ be a coherent sheaf of rank $r$ on a complex analytic variety $X$, 
and denote by $\t(\F)$ the torsion subsheaf of $\F$.
Then the Nash transformations of the sheaves $\F$, $\wedge^r \cF$, and $\F/\t(\F)$
are all isomorphic.
\end{lemma}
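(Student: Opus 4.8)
The plan is to reduce all three Nash transformations to the same universal object by showing that, for each of the sheaves in question, having a locally free rank-$r$ quotient after pullback is the same condition. The key observation is that for a coherent sheaf $\F$ of generic rank $r$, any surjection $\psi^*\F \twoheadrightarrow \cL$ onto a locally free sheaf of rank $r$ must, by a rank count over the open locus where $\psi^*\F$ has rank $r$, have kernel precisely the torsion subsheaf $\t(\psi^*\F)$; and pullback of torsion is torsion up to the locus where $\psi$ fails to be an isomorphism, which is negligible for a bimeromorphic $\psi$. Granting this, the locally free rank-$r$ quotient of $\psi^*\F$ is canonically identified with $\psi^*\F/\t(\psi^*\F)$, which equals $\psi^*(\F/\t(\F))/\t(\cdots)$; so $\psi^*\F$ admits such a quotient exactly when $\psi^*(\F/\t(\F))$ does. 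By the universal property in \cref{th:N(F)-analytic}, $N(\F) \cong N(\F/\t(\F))$.

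For the comparison with $\wedge^r\F$, I would work over the open dense set $U$ where $\F$ is locally free of rank $r$: there $\wedge^r\F$ is a line bundle (rank $1 = \binom{r}{r}$), and the natural map $\wedge^r\F \to \wedge^r(\F/\t(\F))$ is an isomorphism over $U$, so $\wedge^r\F$ also has generic rank $1$. Given a bimeromorphic $\psi$ such that $\psi^*\F$ has a locally free rank-$r$ quotient $\cL$, taking $\wedge^r$ of the surjection $\psi^*\F \twoheadrightarrow \cL$ produces a surjection $\wedge^r\psi^*\F = \psi^*\wedge^r\F \twoheadrightarrow \wedge^r\cL$, the target being an invertible sheaf; hence $\psi^*(\wedge^r\F)$ has a locally free rank-$1$ quotient, so $N(\F)$ factors through $N(\wedge^r\F)$. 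Conversely, I must produce, from a locally free rank-$1$ quotient of $\psi^*\wedge^r\F$, a locally free rank-$r$ quotient of $\psi^*\F$; this is where I will use that $\psi^*\F/\t$ is torsion free of rank $r$ and its top exterior power, which maps onto the invertible quotient and agrees with it on a dense open set, is therefore invertible — and a torsion-free sheaf of rank $r$ on a (locally) normal, or at worst reduced, analytic variety whose determinant is invertible need only be shown to be locally free at this stage. Since we are free to further dominate, the cleanest route is to observe that $N(\wedge^r\F) \to X$ already makes the pullback of $\F$ have torsion-free-of-rank-$r$ pullback with invertible determinant, and then invoke the universal property of $N(\F)$ after checking $N(\wedge^r\F)$ itself carries a rank-$r$ locally free quotient of the pulled-back $\F$; equivalently, run the same argument with the roles reversed using $\F/\t(\F) \hookrightarrow$ a locally free sheaf locally.

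Concretely I would organize the proof as: (1) replace $\F$ by $\F/\t(\F)$ using the torsion argument above, so all three sheaves are now torsion free; (2) for torsion-free $\F$ of rank $r$, show $\psi^*\F$ has a locally free rank-$r$ quotient iff $\psi^*\F/\t(\psi^*\F)$ is locally free iff its determinant $\det(\psi^*\F/\t(\psi^*\F)) = \psi^*(\wedge^r\F)/\t$ is locally free, the last equivalence being the nontrivial direction; (3) conclude via \cref{th:N(F)-analytic} that the three universal morphisms coincide. The main obstacle I expect is step (2)'s hard direction: deducing local freeness of a torsion-free rank-$r$ sheaf from invertibility of its determinant. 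In the scheme-theoretic reference \cite{OZ91} this is handled over normal bases; here the base $X$ is an arbitrary reduced complex analytic variety, so I would either restrict to the normal case (which suffices for the application to $\~V$), or argue locally by observing that after the Nash transformation $N(\wedge^r\F)$ the surjection $\wedge^r\psi^*\F \twoheadrightarrow \cL$ forces the $r$ generators witnessing the top exterior power to be a local frame wherever $\cL$ is trivialized — a Cramer's-rule/Fitting-ideal computation that is routine but must be stated carefully. Everything else is formal manipulation with the universal property already established in \cref{lem:universal-property}.
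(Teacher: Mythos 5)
Your approach is a valid alternative to the paper's, which disposes of the $\wedge^r$-comparison by a single geometric observation: after fixing a local surjection $\O_X^{\oplus m}\twoheadrightarrow\F$ as in the construction preceding \cref{th:N(F)-analytic}, the Pl\"ucker embedding $X\times\Gr(\C^m,r)\hookrightarrow X\times\P(\wedge^r\C^m)$ intertwines the classifying graphs of $\F$ and $\wedge^r\F$ over the dense open $U$ where $\F$ is locally free, hence carries $N(\F)$ (the closure of the graph over $U$) isomorphically onto $N(\wedge^r\F)$. The equality $N(\F)=N(\F/\t(\F))$ is treated as clear by construction, since a locally free rank-$r$ quotient of any pullback automatically kills torsion, so the two Grassmannian bundles close up $U$ to the same subvariety of $X\times\Gr(\C^m,r)$. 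Your argument instead checks the universal property of \cref{th:N(F)-analytic} directly; the first half of your outline is fine, and the burden falls entirely on the ``Cramer's rule'' lemma you flag in step (2): a torsion-free sheaf $\cG$ of rank $r$ with $\wedge^r\cG/\t(\wedge^r\cG)$ invertible is locally free.

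Your hesitation about that lemma over non-normal bases is unwarranted: it holds over any reduced irreducible analytic base, by exactly the computation you gesture at. Locally, the invertible sheaf $\L:=\wedge^r\cG/\t(\wedge^r\cG)$ is, by Nakayama, generated by the class of a single decomposable element $s_1\wedge\cdots\wedge s_r$. Given $t\in\cG$, define $a_j\in\O$ by $[\,t\wedge s_1\wedge\cdots\wedge\widehat{s_j}\wedge\cdots\wedge s_r\,]=(-1)^{j-1}a_j\,[s_1\wedge\cdots\wedge s_r]$ in $\L$; then $t-\sum_j a_js_j$ wedges to zero in $\L$ against every $(r-1)$-fold product of the $s_i$, and since $s_1,\dots,s_r$ form a basis at the generic point (where $\L$ is nonzero) and $\cG$ is torsion-free, one gets $t=\sum_j a_js_j$. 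Thus $\cG$ is locally generated by $r$ elements, and a surjection $\O^{\oplus r}\twoheadrightarrow\cG$ onto a torsion-free sheaf of the same rank has torsion kernel inside a torsion-free source, hence is an isomorphism. With this supplied, your steps (1)--(3) close up the argument. The Pl\"ucker route simply avoids this computation and stays visibly inside the ambient products already built in \cref{th:N(F)-analytic}; your route has the compensating merit of isolating the precise local-algebra fact that makes the determinant detect local freeness.
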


\begin{proof}
The identification $N(\F) = N(\F/\t(\F))$ is clear by construction, and
working locally so that we have a surjection $\O_X^{\oplus m} \surj \F$, 
one sees that $N(\F)$ is mapped isomorphically to $N(\wedge^r\F)$ via
the Pl\"ucker embedding $X \times \Gr(\C^m,r) \inj X \times \P(\wedge^r\C^m)$.
\end{proof}

\begin{remark}
The Nash transformation of a coherent sheaf $\F$ on a complex analytic variety $X$
is, locally, the blow-up of a finitely generated ideal. Indeed, 
\cref{lem:N(F)=N(detF)=N(F/tor)} implies that the Nash transformation of $\F$
is the same as the Nash transformation of $\wedge^r\F/\t(\wedge^r\F)$, and 
every point of $X$ has an open neighborhood over which
the latter is isomorphic to a finitely generated ideal. 
Realizing a Nash transformation as the blow-up of a finitely generated ideal
is the approach followed by Nobile \cite{Nob75} to prove that the Nash blowing-up
of a complex analytic variety (namely, the Nash transformation of $\Om_X^1$) 
is a holomorphic map of analytic varieties
(a different proof of this fact is given in \cite{Whi65}).
\end{remark}

Before we proceed with the proofs of \cref{th:Plateau-CY,th:Plateau-hypersurface}, 
we briefly discuss the Zariski--Lipman conjecture.
The conjecture (stated in \cite{Lip65} as a question)
is that any algebraic variety $X$ defined over a field of characteristic zero whose
the tangent sheaf $(\Om_X^1)^*$ is locally free is smooth. 
The same conjecture can be made for complex analytic varieties. 

One first evidence was obtained in \cite{vSS85}, 
where the property is shown to hold for isolated singularities of dimension $\ge 3$. 
The setting is analytic, but the same argument works for algebraic varieties. 
The proof uses extension properties of differentials, a trick 
that has been borrowed in \cite{GKKP11} to settle the conjecture for 
varieties with log terminal singularities. We will apply the result of \cite{vSS85}
in the proof of \cref{th:Plateau-CY}.

The other case that we will use in the proof of \cref{th:Plateau-hypersurface} is due to \cite{Kal11}, 
where the property is established for varieties with locally complete intersection singularities.

\begin{remark}
\label{rmk:Kal11-analytic}
The result of \cite{Kal11} is stated in the algebraic setting, but 
for isolated singularities it
can easily be extended to the analytic setting, as follows. 
Let $X$ be a complex analytic variety with a locally complete intersection isolated singularity at a point $x$.
By Artin's approximation theorem \cite[Theorem~3.8]{Art69}, 
there is a complex algebraic variety $Y$ with a closed point $y \in Y$ 
such that $X$ and $Y^\an$ have isomorphic open neighborhoods
at these points. It follows that 
$Y$ has a locally complete intersection isolated singularity at $y$
(e.g., see \cite[Tag~09Q6]{stacks-project}).
We have $(\Om_Y^1)^\an = \Om_{Y^\an}^1$, and hence 
$((\Om_Y^1)^*)^\an = (\Om_{Y^\an}^1)^*$ since taking duals of coherent $\O_Y$-modules commutes with 
analytification.
If $(\Om_X^1)^*$ is locally free at $x$ then $(\Om_Y^1)^*$ is locally free 
at $y$, and therefore $Y$ is smooth at $y$ by \cite[Corollary~1.3]{Kal11}. 
This can only occur if $X$ is smooth at $x$. 
\end{remark}

\begin{proof}[Proof of \cref{th:Plateau-CY}]
If $\~V$ is smooth then $g^{(\wedge^n1)}(M)=0$ by \cref{lem:g(M)}. 
Conversely, assume that $g^{(\wedge^n1)}(M)=0$. 
\cref{lem:g(M)} implies that the map $\wedge^n\Om_{\~V}^{[1]} \to \Om_{\~V}^{[n]}$
is surjective, and since the kernel of this map
is the torsion of $\wedge^n\Om_{\~V}^{[1]}$, we have
\[
\Om_{\~V}^{[n]} \cong \wedge^n\Om_{\~V}^{[1]}/\t(\wedge^n\Om_{\~V}^{[1]}).
\]
Note that $\Om_{\~V}^{[n]} \cong \O_{\~V}$ by \cref{lem:CY} and the assumption that $M$ is Calabi--Yau.
This implies that the Nash transformation of 
$\wedge^n\Om_{\~V}^{[1]}/\t(\wedge^n\Om_{\~V}^{[1]})$ is an isomorphism, and hence
it follows by \cref{lem:N(F)=N(detF)=N(F/tor)} that the Nash transformation of 
$\Om_{\~V}^{[1]}$ is an isomorphism.
By \cref{th:N(F)-analytic}, we see this can only occur if $\Om_{\~V}^{[1]}$ has a locally free quotient
of the same rank, 
and since $\Om_{\~V}^{[1]}$ is torsion free, this means that $\Om_{\~V}^{[1]}$ is locally 
free. 

Since $\~V$ is reduced, we have $(\Om_{\~V}^1)^* = (\Om_{\~V}^1)^{***}$, and therefore
$\Om_{\~V}^{[1]}$ is locally free if and only if $(\Om_{\~V}^1)^*$ is locally free.  
To finish the proof, we need to argue that $(\Om_{\~V}^1)^*$ can only be locally free
if $\~V$ is smooth. This is precisely the content of the Zariski--Lipman conjecture. 
As $\~V$ has isolated singularities and
we are assuming that $n \ge 3$, this is proved in \cite{vSS85}.
\end{proof}

\begin{corollary}
\label{th:Plateau-Gor}
Let $V \subset \C^N$ be a bounded Stein space of dimension $n \ge 3$ with isolated singularities and 
boundary regularity, and let $M$ denote the boundary. Let $\~V \to V$ be the normalization, and
assume that $\Om_{\~V}^{[n]}$ is a line bundle. 
Then $\~V$ is smooth if and only if $g^{(\wedge^n1)}(M) = 0$.
\end{corollary}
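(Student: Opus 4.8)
The plan is to reduce this statement to \cref{th:Plateau-CY} by observing that the hypotheses here are essentially a restatement of that theorem's setup. First I would note that, by the results of Rossi and Harvey--Lawson recalled in the introduction, a bounded Stein space $V \subset \C^N$ with isolated singularities and boundary regularity has boundary $M$ which is a strongly pseudoconvex compact CR manifold of dimension $2n-1$; and since $V$ can be taken to sit inside the boundary of a strongly pseudoconvex bounded domain (this is part of the standing hypotheses throughout the paper), $M$ satisfies the hypotheses of \cref{th:Plateau-CY} \emph{provided} we also know $M$ is Calabi--Yau. So the first key step is to invoke \cref{lem:CY}: the assumption that $\Om_{\~V}^{[n]}$ is a line bundle is not quite the assumption $\Om_{\~V}^{[n]} \cong \O_{\~V}$, so I would need to upgrade ``$\Om_{\~V}^{[n]}$ is invertible'' to ``$\Om_{\~V}^{[n]}$ is trivial.''

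The bridge here is that $\~V$ is itself a Stein space (being the normalization of a Stein space), and on a Stein space an invertible sheaf is trivial if and only if its class in $\Pic(\~V) \cong H^1(\~V, \O_{\~V}^*)$ vanishes; since $V$ is contractible onto a Stein filling (more precisely, since one can shrink and $H^2(\~V,\Z)$ controls the obstruction via the exponential sequence, and $H^1(\~V,\O_{\~V}) = 0$ as $\~V$ is Stein), the line bundle $\Om_{\~V}^{[n]}$ is topologically and hence holomorphically trivial. Alternatively, and more in the spirit of the paper, one observes that the boundary $M$ carries the canonical line bundle $\wedge^n \^T(M)^*$, which is the restriction of $\Om_{\~V}^{[n]}$ to (a collar neighborhood of $M$ in) $U$; an invertible sheaf on a Stein space restricted near the boundary of a strongly pseudoconvex domain is trivial, so $M$ is Calabi--Yau, and then \cref{lem:CY} gives $\Om_{\~V}^{[n]} \cong \O_{\~V}$ back on all of $\~V$. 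Either way, once $M$ is known to be Calabi--Yau, the corollary is immediate from \cref{th:Plateau-CY}: $\~V$ is smooth if and only if $g^{(\wedge^n1)}(M) = 0$.

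The main obstacle is the triviality-versus-invertibility gap, i.e., ruling out a topologically nontrivial canonical bundle on $\~V$ or on the link $M$. I expect this to be routine given Steinness and the strong pseudoconvexity of the filling, but it is the one point where the argument genuinely uses more than a black-box citation of \cref{th:Plateau-CY}; the cleanest formulation is to argue directly that $\wedge^n\^T(M)^*$ is holomorphically trivial on $M$ because it extends to an invertible sheaf on the Stein space $\~V$ and $H^1(\~V,\O_{\~V}) = 0$, after which \cref{lem:CY} closes the loop. One should also double-check that the normalization $\~V$ inherits the property of being a bounded Stein space with isolated singularities and boundary regularity — this follows because normalization is finite, hence proper, and is an isomorphism over the boundary $M$ since $M$ is smooth, so $\~V$ and $V$ have the same boundary CR manifold and the invariant $g^{(\wedge^n1)}(M)$ is the same for both by \cref{lem:g(M)}.
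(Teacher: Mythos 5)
Your reduction to \cref{th:Plateau-CY} is the right idea, but there is a genuine gap in the step where you promote ``$\Om_{\~V}^{[n]}$ is invertible'' to ``$\Om_{\~V}^{[n]}$ is trivial.'' The exponential sequence on the Stein space $\~V$ gives $\Pic(\~V)\cong H^2(\~V,\Z)$, and you then implicitly assert $H^2(\~V,\Z)=0$ (or, in your alternative phrasing, that a line bundle on a strongly pseudoconvex domain restricts to something trivial near the boundary). Neither claim is justified, and neither is true in general: a Stein space of dimension $n\ge 2$ can perfectly well have $H^2\ne 0$, and near the boundary $\~V$ is diffeomorphic to $M\times(0,1]$, where $H^2(M,\Z)$ need not vanish. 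So $M$ itself need not be Calabi--Yau under the hypotheses of the corollary, and \cref{th:Plateau-CY} cannot be applied to $M$ directly.

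The fix, and what the paper actually does, is to localize. For each singular point $v_i$, cut out a small ball $V_i$ around it with boundary $M_i$. Over such a small neighborhood any invertible sheaf is trivial, so $\Om_{\~V_i}^{[n]}\cong\O_{\~V_i}$, and by \cref{lem:CY} each $M_i$ \emph{is} Calabi--Yau; \cref{th:Plateau-CY} then applies to each $M_i$ separately. The second ingredient you are missing is the additivity $g^{(\wedge^n 1)}(M)=\sum_i g^{(\wedge^n 1)}(M_i)$, which follows from \cref{lem:g(M)} because the cokernel of $\wedge^n\Om_{\~V}^{[1]}\to\Om_{\~V}^{[n]}$ is supported only at the singular points. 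Since each summand is a nonnegative integer, $g^{(\wedge^n 1)}(M)=0$ iff all $g^{(\wedge^n 1)}(M_i)=0$ iff each $\~V_i$ is smooth iff $\~V$ is smooth. Your parenthetical ``one can shrink'' gestures at this, but the argument as written tries to avoid shrinking by proving global triviality of the canonical sheaf, which is exactly the point where it fails.
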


\begin{proof}
For each singular point $v_i \in V$, let $V_i \subset V$ to be the open neighborhood cut by 
an open ball of sufficiently small radius centered at $v_i$, and let $M_i$ denote the boundary of $V_i$.
If $\~V_i \to V_i$ is the normalization map, then we can ensure that 
$\Om_{\~V_i}^{[n]} \cong \O_{\~V_i}$ for every $i$. Note that the sets $\~V_i$ cover
the singular points of $\~V$.
By \cref{lem:g(M)}, we have
\[
g^{(\wedge^n1)}(M) = \sum g^{(\wedge^n1)}(M_i).
\]
and by \cref{th:Plateau-CY} we see that each $\~V_i$ is smooth if and only if $g^{(\wedge^n1)}(M_i) = 0$. 
Since the numbers $g^{(\wedge^n1)}(M_i)$ are nonnegative, this occurs for every $i$ if and 
only if $g^{(\wedge^n1)}(M) = 0$.
\end{proof}

We obtain \cref{th:Plateau-hypersurface} as a special case of the following result.

\begin{theorem}
\label{th:Plateau-lci}
Let $V \subset \C^N$ be a bounded Stein space of dimension $n \ge 2$ with isolated locally complete intersection
singularities and boundary regularity, and let $M$ denote the boundary.
Then $V$ is smooth if and only if $g^{(\wedge^n1)}(M) = 0$.
\end{theorem}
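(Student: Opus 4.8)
The plan is to mimic the structure of the proof of \cref{th:Plateau-CY}, replacing the use of the Zariski--Lipman conjecture for isolated singularities of dimension $\ge 3$ (due to \cite{vSS85}) with the locally complete intersection case due to \cite{Kal11}, in the analytic form recorded in \cref{rmk:Kal11-analytic}. First I would observe that an isolated locally complete intersection singularity of dimension $n \ge 2$ is normal, so the normalization $\m \colon \~V \to V$ is an isomorphism and we may work directly on $V$; in particular $\Om_{\~V}^{[p]} = \Om_V^{[p]}$ and there is no loss in phrasing everything on $V$. As before, the direction ``$V$ smooth $\Rightarrow g^{(\wedge^n1)}(M)=0$'' is immediate from \cref{lem:g(M)}, since on a smooth variety the natural map $\wedge^n\Om_V^{[1]} \to \Om_V^{[n]}$ is an isomorphism.

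For the converse, assume $g^{(\wedge^n1)}(M) = 0$. By \cref{lem:g(M)} the map $\wedge^n\Om_V^{[1]} \to \Om_V^{[n]}$ is surjective; its kernel is exactly the torsion subsheaf of $\wedge^n\Om_V^{[1]}$ (the map being an isomorphism on the smooth locus, which is dense), so
\[
\Om_V^{[n]} \cong \wedge^n\Om_V^{[1]}/\t\!\left(\wedge^n\Om_V^{[1]}\right).
\]
Here, unlike in \cref{th:Plateau-CY}, I cannot invoke the Calabi--Yau hypothesis to conclude $\Om_V^{[n]} \cong \O_V$. However, since $V$ is a locally complete intersection, it is Gorenstein, and for a Gorenstein normal variety the reflexive sheaf $\Om_V^{[n]}$ is the dualizing sheaf $\omega_V$, which is a \emph{line bundle}. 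Therefore $\Om_V^{[n]}$ is locally free of rank one, and its Nash transformation is an isomorphism. By \cref{lem:N(F)=N(detF)=N(F/tor)}, the Nash transformation of $\wedge^n\Om_V^{[1]}/\t(\wedge^n\Om_V^{[1]})$, of $\wedge^n\Om_V^{[1]}$, and of $\Om_V^{[1]}$ all coincide, so the Nash transformation of $\Om_V^{[1]}$ is an isomorphism. By \cref{th:N(F)-analytic} this forces $\Om_V^{[1]}$ to admit a locally free quotient of the same rank $n$; since $\Om_V^{[1]}$ is torsion free of rank $n$, it is itself locally free.

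Finally, as $V$ is reduced we have $(\Om_V^1)^* = (\Om_V^1)^{***}$, so $\Om_V^{[1]} = (\Om_V^1)^{**}$ locally free is equivalent to the tangent sheaf $(\Om_V^1)^*$ being locally free. By \cref{rmk:Kal11-analytic}, applied locally around each (isolated) singular point — passing through Artin approximation to an algebraic locally complete intersection germ and invoking \cite[Corollary~1.3]{Kal11} — local freeness of $(\Om_V^1)^*$ implies $V$ is smooth. This completes the proof, and \cref{th:Plateau-hypersurface} follows by taking $N = n+1$, since a hypersurface is a locally complete intersection. The main point that differs from \cref{th:Plateau-CY}, and the one I would be careful to state cleanly, is the replacement of the Calabi--Yau trivialization of $\Om^{[n]}$ by the Gorenstein observation that $\Om_V^{[n]}=\omega_V$ is a line bundle — which is all that is needed for \cref{lem:N(F)=N(detF)=N(F/tor)} to apply — together with invoking the correct (lci) case of Zariski--Lipman; there is no serious obstacle beyond bookkeeping, since $n=2$ is now permitted precisely because \cite{Kal11} covers surfaces.
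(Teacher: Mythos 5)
Your proof is correct and is essentially the paper's own argument spelled out in full: the paper's proof of this theorem simply observes that $V$ is normal and Gorenstein (hence $\Om_V^{[n]}$ is a line bundle), runs ``the same argument'' as in \cref{th:Plateau-CY} to get local freeness of $\Om_V^{[1]}$, and invokes \cite{Kal11} via \cref{rmk:Kal11-analytic}. You correctly identify that the Calabi--Yau trivialization is never truly used in \cref{th:Plateau-CY} — only that $\Om^{[n]}$ be locally free of rank one — which is exactly the point the paper is implicitly relying on.
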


\begin{proof}
The space $V$ is normal and Gorenstein, and hence $\Om_V^{[n]}$ is a line bundle.
The same argument used in the proof of \cref{th:Plateau-CY} shows that $\Om_V^{[1]}$ is locally free, 
and we conclude this time by applying \cite{Kal11} (cf.\ \cref{rmk:Kal11-analytic}).
\end{proof}

\begin{proof}[Proof of \cref{th:Plateau-hypersurface}]
This follows directly from \cref{th:Plateau-lci} since
the filling $V$ is a hypersurface in $\C^{n+1}$.
\end{proof}

\begin{remark}
An alternative way of writing these proofs is to apply Artin's approximation theorem
early on in the proofs, thus reducing to work with Nash transformations and the Zariski--Lipman
conjecture in the algebraic setting. 
For clarity of exposition, 
we have opted to stick to the analytic setting, even though this requires a little preparation.
In this way we also avoid relying on Artin's theorem in the proof of \cref{th:Plateau-CY}.
\end{remark}

\section{Proof of \cref{th:link-invariant}}

In order to define a link-theoretic invariant of singularity using CR geometry, we
introduce a suitable equivalence relation among compact strongly pseudoconvex CR structures.
We start by recalling \cite[Definition~5.3]{dFT17}, which is in turn modeled upon
\cite[Definition~1.1]{HLY06}.

\begin{definition}
\label{d:CR-family}
A \emph{CR deformation family} of relative dimension $2n-1$ is a
strongly pseudoconvex CR manifold $\M$ of dimension $2n+1$ with a
proper $C^{\infty}$ submersion
$f\colon \M \rightarrow \D = \{t \in \C \mid |t| < 1\}$, such that for any $t \in \C$, the fiber
$\M_t = f^{-1}(t)$ is a strongly pseudoconvex CR manifold with CR structure
$T_{1,0}(\M_t) = T_{1,0}(\M) \cap \C T(\M_t)$.
\end{definition}

The next two definitions should be compared to \cite[Definitions~5.5 and~5.6]{dFT17}.

\begin{definition}
\label{d:cohomologically CR-family}
A CR deformation family $f\colon \M \rightarrow \D$ as in \cref{d:CR-family}
is said to be \emph{semirigid} if 
the invariant $g^{\wedge^n1}(\M_t)$ and, for $n \ge 3$,  
the dimension of the Kohn--Rossi cohomology group $H_\KR^{0,1}(\M_t)$
are constant as functions of $t \in \D$.
\end{definition}

\begin{definition}
\label{d:CR-equiv}
Two compact strongly pseudoconvex CR manifolds $(M, T_{1,0}(M))$ and
$(M', T_{1,0}(M'))$ of the same dimension $2n-1$ are 
said to be \emph{semirigid CR deformation equivalent}
if there exists a finite collection of semirigid CR deformation families
$f^i\colon \M^i \rightarrow \D$ of relative dimension $2n-1$, 
indexed by $i = 1,\dots,k$, and pairs of points $s_i,t_i \in \D$, such that
\begin{enumerate}
\item
$(\M^i_{s_i}, T_{1,0}(\M^i_{s_i})) \simeq (\M^{i+1}_{t_{i+1}}, T_{1,0}(\M^{i+1}_{t_{i+1}}))$ 
for $i < k$;
\item
$(\M^1_{s_1}, T_{1,0}(\M^1_{s_1})) \simeq (M, T_{1,0}(M))$ and 
$(\M^k_{t_k}, T_{1,0}(\M^k_{t_k})) \simeq (M', T_{1,0}(M'))$.
\end{enumerate}
\end{definition}

It is immediate to see that this definition yields an equivalence relation
among compact strongly pseudoconvex CR manifolds.

Let $X \subset \C^N$ be an $n$-dimensional complex analytic variety
with a normal isolated singularity at the origin $0 \in \C^N$.
Consider the function $\r \colon X \cup \de X \to \R$ given by $z \mapsto |z|$.
A number $\a \in (0,\infty)$ is said to be a critical value of $\r$ if
$\a = \r(z)$ where either $z \in X_\reg$ is a point where $\r$ is not a submersion,
or $z \in (\Sing X) \cup \de X$. Let $\a^*(X) \in (0,\infty]$ be the supremum of the numbers $\a$
such that there are no a critical values of $\r$ in the interval $(0,\a)$. 
Then for every $\ep \in (0,\a^*(X))$ the set
\[
L_{X,\ep} := S^{2N-1}_\ep \cap X
\]
is a link of $X$. We regard $L_{X,\ep}$ as a CR manifold with the CR structure 
inherited from the embedding in $X$. 

The exact same proof of \cite[Theorem~5.7]{dFT17} yields the following result.

\begin{proposition}
\label{t:CR}
Suppose that $X \subset \C^N$ and $X' \subset \C^{N'}$ are two complex analytic
varieties with isomorphic germs of normal isolated singularities $(X,0) \cong (X',0')$
at the respective origins $0 \in \C^N$ and $0' \in \C^{N'}$.
Then for any $0 < \ep < \a^*(X)$ and $0 < \ep' < \a^*(X')$
the links $L_{X,\ep}$ and $L_{X',\ep'}$ are semirigid CR deformation equivalent.
\end{proposition}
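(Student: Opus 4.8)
The plan is to reduce \cref{t:CR} to the cited result \cite[Theorem~5.7]{dFT17} by checking that the only new ingredient---the fact that the relevant CR deformation families are \emph{semirigid} in the sense of \cref{d:cohomologically CR-family} rather than in the (more restrictive) sense used in \cite{dFT17}---is automatically satisfied. First I would recall the structure of the proof of \cite[Theorem~5.7]{dFT17}: given isomorphic germs $(X,0) \cong (X',0')$, one first observes that any two links $L_{X,\ep_1}$ and $L_{X,\ep_2}$ with $0 < \ep_1, \ep_2 < \a^*(X)$ are CR isomorphic, since the region $\{\ep_1 \le |z| \le \ep_2\} \cap X$ contains no critical values of $\r$ and hence the links are connected by the flow of a gradient-like vector field, which preserves the CR structure; and secondly, if $\phi \colon (X,0) \xrightarrow{\sim} (X',0')$ is a biholomorphism of germs then pulling back the family of spheres $S^{2N'-1}_\ep$ under $\phi$ produces a one-parameter family of CR manifolds interpolating between a small link of $X$ and (a CR-isomorphic copy of) a small link of $X'$, and this family is a CR deformation family in the sense of \cref{d:CR-family}.

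The key point to verify is that all the CR deformation families appearing in that construction are semirigid in the sense of \cref{d:cohomologically CR-family}, i.e.\ that $g^{(\wedge^n1)}(\M_t)$ and (for $n \ge 3$) $\dim H^{0,1}_\KR(\M_t)$ are constant in $t$. For the families arising from varying $\ep$ within $(0,\a^*(X))$, or from varying the embedded sphere over a fixed germ, every fiber $\M_t$ is in fact the link of the \emph{same} germ of singularity $(X,0)$, up to CR isomorphism. Hence $g^{(\wedge^n1)}(\M_t)$ is constant because, by \cref{lem:g(M)} and the localization in the proof of \cref{th:Plateau-Gor}, it is computed from the Stein filling, which over each fiber is (a representative of) the fixed singularity germ; similarly $\dim H^{0,1}_\KR(\M_t)$ is an analytic invariant of the germ---it equals the dimension of the degree-one part of the relevant local cohomology of $\O$ on the Stein filling---and so is constant. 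In other words, semirigidity here is a consequence of the fibers all being links of one germ, not an extra hypothesis one must arrange.

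The main obstacle---really the only thing requiring care---is making precise the claim that $g^{(\wedge^n1)}$ and $\dim H^{0,1}_\KR$ of a link depend only on the analytic germ of the singularity it bounds, uniformly along the deformation. For $g^{(\wedge^n1)}$ this follows from \cref{lem:g(M)}: the invariant equals the dimension of the cokernel of $\wedge^n\Om^{[1]}_{\~V} \to \Om^{[n]}_{\~V}$, which is a finite sum of contributions supported at the singular points of the normalized Stein filling, each contribution depending only on the corresponding analytic germ; since along the families in question the Stein filling germ does not change, the total is constant. The statement for $H^{0,1}_\KR(\M_t)$ in the range $n \ge 3$ is exactly the constancy already used (via \cref{d:cohomologically CR-family}) and matches the Kohn--Rossi constancy condition invoked in the proof of \cite[Theorem~5.7]{dFT17}; here one uses that $H^{0,1}_\KR$ of a link is identified with $H^1$ of the structure sheaf on the smooth locus of the filling near the singular points, again a germ invariant. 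Once these identifications are in place, one simply concatenates the finitely many semirigid CR deformation families---those relating $L_{X,\ep}$ to a small link of $(X,0)$, the CR isomorphism transporting the germ isomorphism $(X,0)\cong(X',0')$, and those relating a small link of $(X',0')$ to $L_{X',\ep'}$---and concludes via \cref{d:CR-equiv} that $L_{X,\ep}$ and $L_{X',\ep'}$ are semirigid CR deformation equivalent, which is precisely the assertion of \cref{t:CR}.
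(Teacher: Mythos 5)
Your proposal is correct and takes essentially the same approach as the paper: the paper states that the exact same proof as \cite[Theorem~5.7]{dFT17} applies, with the key observation being that $g^{(\wedge^n 1)}(M)$ depends only on the germs of the normal filling at its singular points (via \cref{lem:g(M)}), and that the same holds for $\dim H^{0,1}_\KR(M)$ when $n \ge 3$ (the paper cites \cite[Theorem~B]{Yau81} for the latter, which you justify somewhat informally but in the same spirit). Your elaboration of the structure of the \cite{dFT17} argument and your checking that all constructed families are semirigid matches the intended reading of the paper's proof.
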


The key fact in the proof of this property is that, for an embeddable strongly pseudoconvex 
compact CR manifold $M$, \cref{lem:g(M)} implies the value of $g^{\wedge^n1}(M)$ 
only depends on the germs of the normal filling of $M$ at its singular points,
and if $n \ge 3$, then so does the dimension of the group $H_\KR^{0,1}(M)$
by \cite[Theorem~B]{Yau81}. 

The following is a more precise restatement of \cref{th:link-invariant}.

\begin{theorem}
\label{th:CR-link-invariant}
An $n$-dimensional complex analytic variety $X \subset \C^N$ with only normal isolated singularities
is smooth at a point $0 \in X$ if and only if
the semirigid CR deformation equivalence class of the link of $X$ at $0$ is the class of the
unit sphere $S^{2n-1} \subset \C^n$ equipped with the standard CR structure.
\end{theorem}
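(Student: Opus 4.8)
The plan is to deduce \cref{th:CR-link-invariant} directly from \cref{th:Plateau-CY} together with \cref{t:CR}. The two directions are handled separately. For the easy direction, suppose $X$ is smooth at $0$; then a small enough link $L_{X,\ep}$ is CR-isomorphic to the standard sphere $S^{2n-1}\subset\C^n$ (choosing local analytic coordinates identifying the germ $(X,0)$ with $(\C^n,0)$, and using \cref{t:CR} to pass between different radii and embeddings), so its semirigid CR deformation equivalence class is that of $S^{2n-1}$. The content is therefore entirely in the converse.

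For the converse, assume the semirigid CR deformation equivalence class of the link $L_{X,\ep}$ of $X$ at $0$ equals the class of the standard sphere $S^{2n-1}$. First I would observe that the invariant $g^{(\wedge^n1)}$ is constant along each semirigid CR deformation family by \cref{d:cohomologically CR-family}, so it is constant across the whole equivalence class; since $g^{(\wedge^n1)}(S^{2n-1}) = 0$ (the sphere bounds the smooth ball $B^{2n}$, whose normalization is smooth, so \cref{lem:g(M)} gives vanishing), we conclude $g^{(\wedge^n1)}(L_{X,\ep}) = 0$. Next, by the key remark following \cref{t:CR}, $g^{(\wedge^n1)}(L_{X,\ep})$ equals the corresponding local invariant of the normal Stein filling of $L_{X,\ep}$ at its singular points, and for $\ep$ small this filling is just the germ $(X,0)$ (more precisely its normalization, but $(X,0)$ is already normal by hypothesis); hence the local contribution of $(X,0)$ to $g^{(\wedge^n1)}$ is zero.

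At this point one wants to invoke \cref{th:Plateau-CY} (or rather its local avatar \cref{th:Plateau-Gor}) to conclude that $(X,0)$ is smooth, but there is a gap: \cref{th:Plateau-CY} requires the Calabi--Yau hypothesis, i.e.\ $\Om^{[n]}$ trivial near the singularity, and $(X,0)$ is merely a normal isolated singularity. This is the main obstacle. The resolution is that the \emph{deformation} through which $X$ is connected to the standard sphere forces a restriction: I would argue that every member of a semirigid CR deformation family linking $L_{X,\ep}$ to $S^{2n-1}$ must be Calabi--Yau, because the standard sphere is Calabi--Yau (it bounds the ball, and $\Om_{B^{2n}}^{[n]}=\Om_{B^{2n}}^n\cong\O$ is trivial, so by \cref{lem:CY} $S^{2n-1}$ is Calabi--Yau), and the Calabi--Yau condition—triviality of the holomorphic line bundle $\wedge^n\^T(M_t)^*$—is preserved in a semirigid family. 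The cleanest way to see this preservation is to note that, for a strongly pseudoconvex CR manifold $M$, the dimension of $\G_h(M,\wedge^n\^T(M)^*)$ is among the data controlled by the Kohn--Rossi/semirigidity conditions (it is finite and, combined with the constancy of $g^{(\wedge^n1)}$ and the relevant Kohn--Rossi dimension, its positivity propagates); since a strongly pseudoconvex compact CR manifold is Calabi--Yau precisely when $\wedge^n\^T(M)^*$ has a nowhere-vanishing holomorphic section, and on the filling side this is equivalent via \cref{lem:CY} to $\Om_{\~V}^{[n]}\cong\O_{\~V}$, one checks that this reflexive-sheaf triviality is an open and closed condition along the family. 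Granting this, $L_{X,\ep}$ is Calabi--Yau, so $(X,0)$ is a normal isolated singularity with $\Om^{[n]}$ trivial, and with $n = \dim X \ge 3$ (the dimension hypothesis $2n-1\ge 5$ of \cref{th:Plateau-CY}); then \cref{th:Plateau-Gor} applied to a small neighborhood $V_0\ni 0$ with boundary $L_{X,\ep}$ gives that $V_0$—hence $(X,0)$—is smooth. The hard part will be pinning down rigorously that the Calabi--Yau property is constant in a semirigid CR deformation family; if that propagation is not literally available from the cited results, the alternative is to build it into the equivalence relation as an extra (open-closed, hence harmless) condition, or to run the argument through \cref{th:Plateau-lci} in the hypersurface case where no Calabi--Yau hypothesis is needed and $(X,0)$ being smooth after one deformation step is automatic.
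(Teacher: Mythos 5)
You correctly handle the forward direction and correctly reduce the converse to showing $g^{(\wedge^n1)}(L_{X,\ep})=0$ and then verifying the Calabi--Yau hypothesis of \cref{th:Plateau-CY}. You also correctly identify the main obstacle: the link of a general normal isolated singularity need not be Calabi--Yau, so \cref{th:Plateau-CY} cannot be applied directly. However, your proposed fix --- that the Calabi--Yau condition propagates along an arbitrary semirigid CR deformation family --- is a genuine gap. You do not actually prove that triviality of $\wedge^n\widehat{T}(M_t)^*$ is an open and closed condition in $t$; the ``cleanest way'' you sketch (positivity of $\dim\Gamma_h(M,\wedge^n\widehat{T}(M)^*)$ propagating via the semirigidity data) is not rigorous and is not obviously true, since semirigidity only controls $g^{(\wedge^n1)}$ and $H^{0,1}_\KR$, not the existence of a \emph{nowhere-vanishing} holomorphic section of the canonical bundle. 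The two fallback options you offer are also not acceptable: modifying the equivalence relation would change the theorem being proved, and retreating to the hypersurface case does not cover general normal isolated singularities.

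The paper's proof avoids this problem entirely by a different mechanism. Rather than trying to propagate Calabi--Yau across the chain of families, it first uses the vanishing of $H^{0,1}_\KR$ (the other piece of the semirigidity condition, which you omit) together with the results of \cite{HLY06} to build, for each family $\M^i\to\D$, a holomorphic family of normal Stein fillings $\U^i\to\D$. Uniqueness of normal Stein fillings glues consecutive families, and Ehresmann's theorem (applied to the smooth ends of the chain, starting from the ball bounded by $S^{2n-1}$) locates a \emph{single} family $\U\to\D$ whose generic fiber is smooth and contractible and whose central fiber $\U_0$ is singular. Then, instead of abstract propagation, one argues concretely: by Cartan's theorem B and contractibility, $\Pic(\U_t)=0$ for $t\ne 0$, so $\Om^{[n]}_{\U_t}\cong\O_{\U_t}$; adjunction gives $\Om^{[n+1]}_\U\cong\O_\U(D)$ with $D$ supported on $\U_0$, and since $\U_0$ is an irreducible, reduced fiber linearly equivalent to zero, $\Om^{[n+1]}_\U\cong\O_\U$, hence $\Om^{[n]}_{\U_0}\cong\O_{\U_0}$. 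This is what makes $\M_0$ Calabi--Yau and allows \cref{th:Plateau-CY} to be applied. The key ideas you are missing are thus the use of \cite{HLY06} to produce the filling families, the role of the $H^{0,1}_\KR$ vanishing, the reduction via Ehresmann to one family with a smooth contractible fiber, and the contractibility-plus-divisor argument that replaces the unproved ``Calabi--Yau is open and closed'' claim.
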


\begin{proof}
Since we already know that links characterize smooth points among normal surface singularities \cite{Mum61}, 
we can assume that $n \ge 3$.
Suppose, by way of contradiction, that $X$ is actually singular
at $0$ but its link is semirigid CR deformation equivalent to the standard CR sphere. 
Let $f^i \colon \M^i \to \D$, for $i = 1,\dots,k$, 
be a sequence of semirigid CR deformation families as in Definition~\ref{d:CR-equiv}, 
connecting the standard CR sphere $S^{2n-1}$ to a link of $0 \in X$ with its CR structure. 
Note that $g^{\wedge^n1}(\M^i_t) = H_\KR^{0,1}(\M^i_t) = 0$ for some $i$ and some $t$, and therefore for
all $i$ and $t$.

By \cite[Main Theorem]{HLY06}, for every $i$ 
there exist a Stein space $\U^i$ which has $\M^i$ as part of its smooth boundary, 
and a holomophic map $\U^i \to \D$ such that, for every $t \in \D$, 
the fiber $\U^i_t$ is a Stein space with boundary regularity and has $\M^i_t$ as its boundary. 
Since $H_\KR^{0,1}(\M^i_t) = 0$ for all $i$ and $t$,
it follows by \cite[Corollary~1.5]{HLY06} that each fiber $\U^i_t$ has isolated normal singularities.
Using the fact that normal Stein fillings are unique up to isomorphism
(this property is implicit in \cite{Ros65}, see \cite[Corollary~4.3]{dFT17}),
we see that there are isomorphisms $\U^i_{t_i} \cong \U^{i+1}_{s_{i+1}}$ for all $i < k$. 
Note that while $\U^1 \to \D$ has a smooth and contractible fiber, 
one of the fibers of $\U^k \to \D$ is singular, isomorphic to an open neighborhood of $0$ in $X$.

By applying Ehresmann's fibration theorem in the context of families of manifolds with boundary
when moving across a family with smooth fibers, 
and using the isomorphisms $\U^i_{t_i} \cong \U^{i+1}_{s_{i+1}}$ 
to move from one family to the next,
we end up with a family $\U^{i_0} \to \D$, for some index $i_0$,
containing both a singular fiber and a smooth and contractible one. 
For simplicity, we drop the index $i_0$ and denote this family by $\U$
and its relative boundary over $\D$ by $\M$.
After possibly shrinking and re-parameterizing the base, we can assume without loss of generality
that every fiber $\U_t$, for $t \ne 0$, is smooth and contractible and the 
central fiber $\U_0$ is singular with normal isolated singularities. 
Note that $\U$ has normal singularities. 

By adjunction, we have $\Om_{\U_t}^{[n]} = (\Om_\U^{[n+1]}|_{\U_t})^{**}$ for every $t$, 
and since $\U \to \D$ is smooth away from the central fiber, 
we actually have $\Om_{\U_t}^{[n]} = \Om_\U^{[n+1]}|_{\U_t}$ for $t \ne 0$.
By Cartan's theorem B and the contractibility of the smooth fibers,
we have $\Pic(\U_t) = 0$, and hence $\Om_{\U_t}^{[n]} \cong \O_{\U_t}$,
for every $t \ne 0$. This implies that $\Om_\U^{[n+1]} \cong \O_\U(D)$
for some divisor $D$ supported within the central fiber $\U_0$. 
Since $\U_0$ is irreducible and reduced, 
and linearly equivalent to zero as a divisor on $\U$, it follows that $\Om_\U^{[n+1]} \cong \O_\U$. 
We conclude that $\Om_{\U_0}^{[n]} \cong \O_{\U_0}$. 
By \cref{lem:CY}, this means that the boundary $\M_0$ is a Calabi--Yau CR manifold. 
As we have $g^{\wedge^n1}(\M_0) = 0$, \cref{th:Plateau-CY} implies that $\U_0$ is smooth, 
in contradiction with our starting assumption.
\end{proof}

\begin{bibdiv}
\begin{biblist}

\bib{stacks-project}{article}{
   label={Stacks}
   author={The {Stacks Project Authors}},
   title={Stacks Project},
   note={\href{http://stacks.math.columbia.edu}{\tt stacks.math.columbia.edu}},
   year={2017},
}

\bib{Art69}{article}{
   author={Artin, Michael},
   title={Algebraic approximation of structures over complete local rings},
   journal={Inst. Hautes \'Etudes Sci. Publ. Math.},
   number={36},
   date={1969},
   pages={23--58},
}

\bib{Br66a}{article}{
   author={Brieskorn, Egbert},
   title={Examples of singular normal complex spaces which are topological
   manifolds},
   journal={Proc. Nat. Acad. Sci. U.S.A.},
   volume={55},
   date={1966},
   pages={1395--1397},
}

\bib{Br66b}{article}{
   author={Brieskorn, Egbert},
   title={Beispiele zur Differentialtopologie von Singularit\"aten},
   language={German},
   journal={Invent. Math.},
   volume={2},
   date={1966},
   pages={1--14},
}

\bib{dFT17}{article}{
   author={de Fernex, Tommaso},
   author={Tu, Yu-Chao},
   title={Towards a link theoretic characterization of smoothness},
   journal={Math. Res. Lett.},
   volume={24},
   date={2017},
   number={2},
   pages={1239--1262},
}

\bib{DT06}{book}{
   author={Dragomir, Sorin},
   author={Tomassini, Giuseppe},
   title={Differential geometry and analysis on CR manifolds},
   series={Progress in Mathematics},
   volume={246},
   publisher={Birkh\"auser Boston, Inc., Boston, MA},
   date={2006},
}

\bib{Du}{article}{
   author={Du, Rong},
   title={Smooth solution to higher dimensional complex Plateau problem},
   note={Preprint, \href{https://arxiv.org/abs/1712.03820}{\tt arXiv:1712.03820}},
}

\bib{DG14}{article}{
   author={Du, Rong},
   author={Gao, Yun},
   title={New invariants for complex manifolds and rational singularities},
   journal={Pacific J. Math.},
   volume={269},
   date={2014},
   number={1},
   pages={73--97},
}

\bib{DGY16}{article}{
   author={Du, Rong},
   author={Gao, Yun},
   author={Yau, Stephen},
   title={On higher dimensional complex Plateau problem},
   journal={Math. Z.},
   volume={282},
   date={2016},
   number={no.~1-2},
   pages={389--403},
}

\bib{DLY11}{article}{
   author={Du, Rong},
   author={Luk, Hing Sun},
   author={Yau, Stephen},
   title={New invariants for complex manifolds and isolated singularities},
   journal={Comm. Anal. Geom.},
   volume={19},
   date={2011},
   number={5},
   pages={991--1021},
}

\bib{DY12}{article}{
   author={Du, Rong},
   author={Yau, Stephen},
   title={Kohn-Rossi cohomology and its application to the complex Plateau
   problem, III},
   journal={J. Differential Geom.},
   volume={90},
   date={2012},
   number={no.~2},
   pages={251--266},
}
		
\bib{GKKP11}{article}{
   author={Greb, Daniel},
   author={Kebekus, Stefan},
   author={Kov\'acs, S\'andor J.},
   author={Peternell, Thomas},
   title={Differential forms on log canonical spaces},
   journal={Publ. Math. Inst. Hautes \'Etudes Sci.},
   number={114},
   date={2011},
   pages={87--169},
}

\bib{GD71}{book}{
   author={Grothendieck, A.},
   author={Dieudonn\'e, J. A.},
   title={El\'ements de g\'eom\'etrie alg\'ebrique. I},
   language={French},
   series={Grundlehren der Mathematischen Wissenschaften [Fundamental
   Principles of Mathematical Sciences]},
   volume={166},
   publisher={Springer-Verlag, Berlin},
   date={1971},
}
	
\bib{HL75}{article}{
   author={Harvey, Reese},
   author={Lawson, Blaine},
   title={On boundaries of complex analytic varieties. I},
   journal={Ann. of Math. (2)},
   volume={102},
   date={1975},
   number={2},
   pages={223--290},
}

\bib{HL00}{article}{
   author={Harvey, Reese},
   author={Lawson, Blaine},
   title={Addendum to Theorem~10.4 in ``Boundaries of analytic varieties''},
   date={2000},
   note={Preprint, \href{https://arxiv.org/abs/math/0002195}{\tt arXiv:math/0002195}},
}

\bib{HLY06}{article}{
   author={Huang, Miaojun},
   author={Luk, Hing-Sun},
   author={Yau, Stephen},
   title={On a CR family of compact strongly pseudoconvex CR manifolds},
   journal={J. Differential Geom.},
   volume={72},
   date={2006},
   number={3},
   pages={353--379},
}

\bib{Kal11}{article}{
   author={K\"allstr\"om, Rolf},
   title={The Zariski-Lipman conjecture for complete intersections},
   journal={J. Algebra},
   volume={337},
   date={2011},
   pages={169--180},
}

\bib{KR65}{article}{
   author={Kohn, J. J.},
   author={Rossi, Hugo},
   title={On the extension of holomorphic functions from the boundary of a
   complex manifold},
   journal={Ann. of Math. (2)},
   volume={81},
   date={1965},
   pages={451--472},
}

\bib{LYZ15}{article}{
   author={Lin, KePao},
   author={Yau, Stephen},
   author={Zuo, HuaiQing},
   title={Plurigenera of compact connected strongly pseudoconvex CR
   manifolds},
   journal={Sci. China Math.},
   volume={58},
   date={2015},
   number={3},
   pages={525--530},
}

\bib{Lip65}{article}{
   author={Lipman, Joseph},
   title={Free derivation modules on algebraic varieties},
   journal={Amer. J. Math.},
   volume={87},
   date={1965},
   pages={874--898},
}

\bib{LYY01}{article}{
   author={Luk, Hing-Sun},
   author={Yau, Stephen},
   author={Yeh, Larn-Ying},
   title={Bergman kernels on resolutions of isolated singularities},
   journal={Math. Res. Lett.},
   volume={8},
   date={2001},
   number={3},
   pages={303--319},
}

\bib{LY98}{article}{
   author={Luk, Hing-Sun},
   author={Yau, Stephen},
   title={Counterexample to boundary regularity of a strongly pseudoconvex
   CR submanifold: an addendum to the paper of Harvey-Lawson},
   journal={Ann. of Math. (2)},
   volume={148},
   date={1998},
   number={3},
   pages={1153--1154},
}

\bib{LY07}{article}{
   author={Luk, Hing-Sun},
   author={Yau, Stephen},
   title={Kohn-Rossi cohomology and its application to the complex Plateau
   problem. II},
   journal={J. Differential Geom.},
   volume={77},
   date={2007},
   number={no.~1},
   pages={135--148},
}

\bib{McL16}{article}{
   author={McLean, Mark},
   title={Reeb orbits and the minimal discrepancy of an isolated singularity},
   journal={Invent. Math.},
   number={204},
   date={2016},
   pages={505--594},
}

\bib{Mil68}{book}{
   author={Milnor, John},
   title={Singular points of complex hypersurfaces},
   series={Annals of Mathematics Studies, No. 61},
   publisher={Princeton University Press, Princeton, N.J.; University of
   Tokyo Press, Tokyo},
   date={1968},
}

\bib{Mum61}{article}{
   author={Mumford, David},
   title={The topology of normal singularities of an algebraic surface and a
   criterion for simplicity},
   journal={Inst. Hautes \'Etudes Sci. Publ. Math.},
   number={9},
   date={1961},
   pages={5--22},
}

\bib{Nob75}{article}{
   author={Nobile, Augusto},
   title={Some properties of the Nash blowing-up},
   journal={Pacific J. Math.},
   volume={60},
   date={1975},
   number={1},
   pages={297--305},
}

\bib{OZ91}{article}{
   author={Oneto, Anna},
   author={Zatini, Elsa},
   title={Remarks on Nash blowing-up},
   note={Commutative algebra and algebraic geometry, II (Italian) (Turin,
   1990)},
   journal={Rend. Sem. Mat. Univ. Politec. Torino},
   volume={49},
   date={1991},
   number={1},
   pages={71--82 (1993)},
}

\bib{Ros65}{article}{
   author={Rossi, Hugo},
   title={Attaching analytic spaces to an analytic space along a
   pseudoconcave boundary},
   conference={
      title={Proc. Conf. Complex Analysis},
      address={Minneapolis},
      date={1964},
   },
   book={
      publisher={Springer, Berlin},
   },
   date={1965},
   pages={242--256},
}

\bib{Sho02}{article}{
   author={Shokurov, Vyacheslav},
   title={Letters of a bi-rationalist. IV. Geometry of log flips},
   conference={
      title={Algebraic geometry},
   },
   book={
      publisher={de Gruyter, Berlin},
   },
   date={2002},
   pages={313--328},
}

\bib{Siu70}{article}{
   author={Siu, Yum-tong},
   title={Analytic sheaves of local cohomology},
   journal={Trans. Amer. Math. Soc.},
   volume={148},
   date={1970},
   pages={347--366},
}

\bib{Tan75}{book}{
   author={Tanaka, Noboru},
   title={A differential geometric study on strongly pseudo-convex
   manifolds},
   note={Lectures in Mathematics, Department of Mathematics, Kyoto
   University, No. 9},
   publisher={Kinokuniya Book-Store Co., Ltd., Tokyo},
   date={1975},
}

\bib{TYZ13}{article}{
   author={Tu, Yu-Chao},
   author={Yau, Stephen S.-T.},
   author={Zuo, Huaiqing},
   title={Nonconstant CR morphisms between compact strongly pseudoconvex CR
   manifolds and \'{e}tale covering between resolutions of isolated
   singularities},
   journal={J. Differential Geom.},
   volume={95},
   date={2013},
   number={2},
   pages={337--354},
}

\bib{vSS85}{article}{
   author={van Straten, Duco},
   author={Steenbrink, Joseph},
   title={Extendability of holomorphic differential forms near isolated
   hypersurface singularities},
   journal={Abh. Math. Sem. Univ. Hamburg},
   volume={55},
   date={1985},
   pages={97--110},
}

\bib{Whi65}{article}{
   author={Whitney, Hassler},
   title={Tangents to an analytic variety},
   journal={Ann. of Math. (2)},
   volume={81},
   date={1965},
   pages={496--549},
}

\bib{Yau81}{article}{
   author={Yau, Stephen},
   title={Kohn--Rossi cohomology and its application to the complex Plateau
   problem. I},
   journal={Ann. of Math. (2)},
   volume={113},
   date={1981},
   number={1},
   pages={67--110},
}

\bib{Yau04}{article}{
   author={Yau, Stephen},
   title={Global invariants for strongly pseudoconvex varieties with
   isolated singularities: Bergman functions},
   journal={Math. Res. Lett.},
   volume={11},
   date={2004},
   number={5-6},
   pages={809--832},
}

\bib{YZ17}{article}{
   author={Yau, Stephen S. T.},
   author={Zuo, HuaiQing},
   title={Thom-Sebastiani properties of Kohn-Rossi cohomology of compact
   connected strongly pseudoconvex CR manifolds},
   journal={Sci. China Math.},
   volume={60},
   date={2017},
   number={6},
   pages={1129--1136},
}
	
\bib{YZ}{article}{
   author={Yau, Stephen S. T.},
   author={Zuo, HuaiQing},
   title={Kohn-Rossi cohomology and nonexistence of CR morphisms between compact strongly pseudoconvex CR manifolds},
   note={Preprint, \href{http://archive.ymsc.tsinghua.edu.cn/pacm\_download/89/8974-JDG\_accepted\_2017.pdf}{\tt mathscidoc:1803.08002}},
}

\end{biblist}
\end{bibdiv}

\end{document}